\documentclass[a4paper,12pt]{amsart}
\usepackage[normalem]{ulem}
\usepackage{amsfonts}
\usepackage{amsmath}
\usepackage{amssymb}
\usepackage{mathrsfs}
\usepackage{hyperref}
\usepackage{graphicx}
\usepackage{algorithm}
\usepackage{listings}
\usepackage{algorithm,algorithmic}
\usepackage{amsmath,tikz}
\usetikzlibrary{matrix}
\usepackage{colortbl}
\usepackage{esint}

\numberwithin{equation}{section}
\theoremstyle{plain}
\newtheorem{thm}{Theorem}[section]

\newtheorem{lemma}[thm]{Lemma}

\theoremstyle{definition}
\newtheorem{defi}[thm]{Definition}

\begin{document}

\title[Asymptotic analysis of the spectrum of biharmonic Steklov problem]
{Asymptotic behavior and spectral distortion for  biharmonic Steklov problems on thin domains }

\author[Bauyrzhan  Derbissaly]
{Bauyrzhan Derbissaly}
\address{
  Bauyrzhan  Derbissaly: 
  \endgraf
   Institute of Mathematics and Mathematical Modeling,
   \endgraf Department of Differential Equations,
  \endgraf
  Pushkin 125 - 050010 Almaty, Kazakhstan
  \endgraf
    {\it E-mail address} {\rm derbissaly@math.kz}
 }

\author[Pier Domenico Lamberti]
{Pier Domenico Lamberti}
\address{
  Pier Domenico Lamberti: 
  \endgraf
   Universit\`{a} degli Studi di Padova, 
   \endgraf Dipartimento di Tecnica e Gestione dei Sistemi Industriali,
  \endgraf
  Stradella S. Nicola 3 - 36100 Vicenza, Italy
  \endgraf
    {\it E-mail address} {\rm lamberti@math.unipd.it}
 }

\thanks{}

\subjclass{Primary 35J40; Secondary 35P15,
35B25, 35J35.} \keywords{biharmonic operator, Steklov boundary condition, thin domain, domain perturbation, spectral analysis}

\begin{abstract}
In this paper, we investigate the asymptotic behavior of the eigenvalues and eigenfunctions of a biharmonic Steklov problem defined on a thin domain in the $n$ dimensional Euclidean space degenerating to a segment.
For $n=2$ the problem models the vibrations of a  thin elastic plate with cross section represented by the given domain and mass concentrated on a free boundary.
The problem under consideration depends on a parameter $\sigma$ that in the theory of elastic plates represents the Poisson ratio of the material. Our analysis points out a distortion
in the limiting problem depending on $\sigma$ and the
space dimension $n$.
\end{abstract}

\maketitle
\section{Introduction}
\noindent
Given a bounded, Lipschitz, connected open set (briefly, a bounded domain of class $C^{0,1}$) $\Omega $ in  $\mathbb{R}^n$ with $n\geq2$ and an open subset $\Gamma$ of its boundary $\partial \Omega$, we consider the following (mixed) generalized Steklov problem for the biharmonic operator
\begin{equation}\label{1.1}
\left\{
\begin{array}{ll}
\displaystyle
\Delta^2u=0,& \ \text{in} \ \Omega,\\
\left(1-\sigma\right)u_{\nu\nu}+\sigma\Delta u+\mu u_\nu =0,& \ \text{on} \ \Gamma,
\\
-\left(1-\sigma\right)\text{div}_{\Gamma}\left(D^2u\cdot\nu\right)_{\Gamma}-\left(\Delta u\right)_{\nu}=\lambda u,& \ \text{on} \ \Gamma,\\
u=u_{\nu}=0,& \ \text{on} \ L . 
\end{array} \right.
\end{equation}
Here, $L=\partial\Omega\setminus \Gamma$, $u$ is the Steklov eigenfunction, $\lambda$ is the Steklov eigenvalue, $ -1/(n-1)<\sigma < 1$, $\mu>0$ are fixed constants, and $\nu$ is the unit outer normal to $\partial\Omega$.
By $\text{div}_{\Gamma}F:= \text{div} F-(\nabla F\cdot \nu)\nu$ we denote the tangential divergence of a vector field
$F$, while $F_{\Gamma}:= F-(F\cdot\nu)\nu$ denotes the tangential component of $F$. As it will be clear in the sequel, 
we impose Dirichlet boundary conditions $u=u_{\nu}=0$ on $L$  in order to simplify our analysis in the specific case under consideration. 

Problem \eqref{1.1} is the natural fourth order version of the  second order Steklov problem 
\begin{equation}\label{1.1bis}
\left\{
\begin{array}{ll}
\displaystyle
\Delta u=0,& \ \text{in} \ \Omega,
\\
u_{\nu}=\lambda u,& \ \text{on} \ \Gamma,\\
u=0,& \ \text{on} \ L . 
\end{array} \right. 
\end{equation}
Problem \eqref{1.1bis} can be seen as the eigenvalue problem of a Dirichlet-to-Neumann map, it has a long history  and many applications. The classical case with $L=\emptyset$ has a  fundamental role in the study of the sloshing problem concerning the small oscillations of a fluid in a small basin and in electrical prospection in which case the Dirichlet-to-Neumann map represents  the voltage-to-current map. In linear elasticity theory, problem \eqref{1.1bis} with $n=2$ provides the fundamental modes of vibrations $u$  and the corresponding eigenfrequencies $\sqrt{\lambda}$ of a free vibrating membrane $\Omega$ with boundary fixed only at $L$ and mass concentrated in $\Gamma$. The Rayleigh quotient associated with problem \eqref{1.1bis} is 
$$\frac{\int_{\Omega}|\nabla u|^2dx}{\int_{\Gamma}u^2 d\mathcal{H}},
$$
where $d\mathcal{H}=d\mathcal{H}^1$ is the one-dimensional Hausdorff measure (arch-length). We mention in passing that  in the general case  $n\geq 2$, we shall  use  the $n-1$-dimensional Hausdorff measure $d\mathcal{H}^{n-1}$. We refer to \cite{ferlamstra}  for more details on the applications of problem \eqref{1.1bis} and to \cite{lampro} for its physical interpretation   in the framework of the theory of vibrating membranes. 

Similarly, problem \eqref{1.1} with $n=2$ arises in the study of elastic plates within the framework of the Kirchhoff-Love model. This model is commonly used to describe the bending behavior, vibrations, and deformations of thin elastic plates. Specifically, the Kirchhoff-Love plate theory assumes that the plate's thickness is small compared to its other dimensions and that normal stresses through the thickness are negligible. The model is employed to predict the deformations of thin plates under various forces and boundary conditions. See \cite{DBLP} for more details. In this context, $\Omega$  represents the cross section of a thin vibrating plate with boundary  clamped only at $L$ and with mass concentrated in $\Gamma$. The parameter  $\sigma$ is the Poisson’s ratio of the material from which the plate is made. The function $u$ is the vibrating mode and represents the deflection from the midplane. The associated potential energy is given by 
\[
\int_{\Omega}\left(1-\sigma\right)|D^2u|^2
+\sigma |\Delta u|^2 dx+\mu\int_{\Gamma}u^2_{\nu} d\mathcal{H}     
\]
and 
\begin{equation}
\label{rayintro}
\frac{
\int_{\Omega}\left(1-\sigma\right)|D^2u|^2
+\sigma |\Delta u|^2 dx+\mu\int_{\Gamma}u^2_{\nu} 
d\mathcal{H}}{\int_{\Gamma}u^2 d\mathcal{H} }   
\end{equation}
is the corresponding Rayleigh quotient. Here $D^2u$ denotes the Hessian matrix of $u$.
We refer to \cite{DBLP} for a rigorous derivation and physical interpretation of problem \eqref{1.1} for $\mu=0$.

As for the second order problem, it turns out that  the set of eigenvalues of problem \eqref{1.1} is a countable set of positive real numbers, that can be arranged into a non-decreasing  sequence that diverges to $+\infty$:
\[
0<\lambda_1\leq\ \cdot\cdot\cdot\leq \lambda_n\leq...
\]

The variational formulation of problem \eqref{1.1} (that can be obtained as the Euler-Lagrange equations associated with the minimization  problem for the Rayleigh quotient \eqref{rayintro}) has the form
\begin{equation}
\label{variationalintro}
\begin{aligned}
\int_{\Omega}\left(1-\sigma\right)D^2u:D^2\varphi
+\sigma \Delta u\Delta \varphi dx&+\mu\int_{\Gamma}u_{\nu}\varphi_{\nu} d\mathcal{H}^{n-1} =
\lambda\int_{\Gamma}u\varphi d\mathcal{H}^{n-1},     
\end{aligned}
\end{equation}
for all $\varphi\in H^2_{L}\left(\Omega\right)$, in the unknowns $u\in H^2_{L}\left(\Omega\right), \ \lambda\in \mathbb{R}$, where $D^2u:D^2\varphi $ denotes the Frobenius product of $D^2u$ and $D^2\varphi$. Note that the boundary conditions imposed on $\Gamma$ in \eqref{1.1} naturally arises by integrating by parts the volume integral  in \eqref{variationalintro}, see e.g., the `Biharmonic Green Formula' in \cite[Lemma~8.61]{arrlam}.

By $H^2_{L}(\Omega)$ we denote the closure in $H^2(\Omega)$ of the space of functions vanishing in a neighborhood of $L$ and $H^2(\Omega)$ is the standard Sobolev space $H^2(\Omega)$ of real-valued functions in $L^2(\Omega)$ with weak derivatives up to order two in $L^2(\Omega)$.

If we set $\mu=+\infty$ in problem \eqref{1.1}, then we obtain the so-called (NBS)-Neumann Biharmonic Steklov problem 
\[
\left\{
\begin{array}{ll}
\displaystyle
\Delta^2u=0,& \ \text{in} \ \Omega,\\
u_\nu=0,& \ \text{on} \ \Gamma,
\\
-\left(1-\sigma\right)\text{div}_{\Gamma}\left(D^2 u\cdot\nu\right)_{\Gamma}-\left(\Delta u\right)_{\nu}=\lambda u,& \ \text{on} \ \Gamma,\\
u=u_{\nu}=0,& \ \text{on} \ L . \\
\end{array} \right.
\]
The problem (NBS) was discussed in \cite{KJR, GL, AP} for the case $\sigma=1$. It is worth noting that problem \eqref{1.1} with $\sigma=\mu=0$ was introduced in \cite{DBLP} as a natural fourth-order extension of the classical Steklov problem for the Laplacian, and was also considered  in \cite{buosostek} for $\sigma\in (-1/(n-1),1)$ and $\mu=0$. Problem \eqref{1.1} with $\mu>0$ was introduced in \cite{PDLLP} to characterize the trace spaces of functions in $H^2(\Omega)$ when $\Omega$ is a bounded Lipschitz domain in $\mathbb{R}^n$. We refer also to \cite{buoken} for further discussion about Robin and Steklov problems for the biharmonic operator.  In the cited papers $\Gamma=\partial\Omega$.

The main ideas of this paper are inspired by two works: \cite{PDL} and \cite{BD}. In \cite{PDL}, the authors examine the spectral properties of the biharmonic operator with homogeneous Neumann boundary conditions on a planar dumbbell domain. This domain consists of two domains connected by a narrow channel. Their study focuses on the asymptotic behavior of the eigenvalues as the channel thickness tends to zero.

On the other hand, \cite{BD} investigates the asymptotic behavior of the eigenvalues and eigenfunctions of the classical second-order Steklov problem in a dumbbell domain, which consists of two  domains connected by a thin tube with vanishing width. The authors demonstrate that, in this case, all eigenvalues converge to zero, with the convergence rate governed by a power of the tube’s width, which scales with the eigenvalues of a corresponding one-dimensional problem.

The study of eigenvalue problems for differential operators on thin domains is a classical topic that attracts the attention of many authors; see, for example, \cite{JM}, \cite{VP}, \cite{FP},  \cite{CD}, \cite{FEPR}, \cite{GA}, \cite{JC}, \cite{SA}, \cite{MC}, and the references therein.

We have the following variational characterization of problem \eqref{1.1}:
\[
\lambda_k=\inf_{S_k}
\sup_{\substack{ u\in S_k\\ {\rm Tr}\, u \ne 0}}\frac{Q_{\sigma,\mu,\Omega}\left(u,u\right)}{\| u\|^2_{L^2(\Gamma)}},
\]
where
\[
Q_{\sigma,\mu,\Omega}\left(u,\varphi\right)=
Q_{\sigma,\Omega}\left(u,\varphi\right)+\mu
(u_\nu,\varphi_\nu)_{L^2(\Gamma)},    
\]
and
\[
Q_{\sigma,\Omega}\left(u,\varphi\right)=\left(1-\sigma\right)\int_{\Omega}D^2u:D^2\varphi dx
+\sigma\int_{\Omega}\Delta u\Delta \varphi dx,
\]
for all $u, \varphi\in H^2\left(\Omega\right)$. As usual,  $(\cdot,\cdot)_{L^2(\Gamma)}$ and $ \| \cdot \|_{L^2(\Gamma)}$ denote the scalar product and the corresponding norm in $L^2(\Gamma)$. Moreover, ${\rm Tr}\, u$ denotes the trace of $u$ on $\Gamma$ and the infimum is taken over all $k$-dimensional subspaces $S_k$ of the Sobolev space
$H^2_{L}\left(\Omega\right)$. 

We now describe the specific class of domains where we are going to formulate our Steklov problem.  Let $\Omega_\epsilon$ be a thin domain in $\mathbb{R}^n$ defined by
\[
\Omega_\epsilon=\left\{(x_1,x')\in\mathbb{R}^n:\ -l<x_1<l, \ |x'|<\epsilon\rho(x_1)\right\},
\]
where $l>0$ and $\rho\in C^0([-l,l])\cap C^\infty\left(-l,l\right)$ is a positive function. Here, $x'=(x_2,...,x_n)$. We note that  the choice of a symmetric interval $[-l,l]$ is in the spirit of \cite{BD} but for our purposes we may choose any interval  $[a,b]$.

We define $\Gamma_\epsilon$ as follows:
\[
\Gamma_\epsilon=\left\{(x_1,x')\in\mathbb{R}^n:\ -l\leq x_1\leq l, \ |x'|=\epsilon \rho(x_1) \right\}.
\]
We denote $L_\epsilon=L^+_\epsilon\cup L^-_\epsilon$, where
\[
\begin{aligned}
L^+_\epsilon&=\left\{(x_1,x')\in\mathbb{R}^n:\  x_1=l, \ |x'|\leq \epsilon\rho\left(l\right)\right\},
\\
L^-_\epsilon&=\left\{(x_1,x')\in\mathbb{R}^n:\  x_1=-l, \ |x'|\leq \epsilon \rho\left(-l\right)\right\}.
\end{aligned}
\]

In the thin domain $\Omega_\epsilon$ we consider the problem
\begin{equation}\label{1.2}
\left\{
\begin{array}{ll}
\displaystyle
\Delta^2u_\epsilon=0,& \ \text{in} \ \Omega_\epsilon,\\
\left(1-\sigma\right)(u_\epsilon)_{\nu_\epsilon\nu_\epsilon}+\sigma\Delta u_\epsilon=-\mu (u_\epsilon)_{\nu_\epsilon},& \ \text{on} \ \Gamma_\epsilon,
\\
-\left(1-\sigma\right)\text{div}_{\Gamma_\epsilon}\left(D^2u_\epsilon\cdot\nu_\epsilon\right)_{\Gamma_\epsilon}-\left(\Delta u_\epsilon\right)_{\nu_\epsilon}=\lambda_\epsilon u_\epsilon,& \ \text{on} \ \Gamma_\epsilon,\\
u_\epsilon=(u_\epsilon)_{\nu_\epsilon}=0,& \ \text{on} \ L_\epsilon.\\
\end{array} \right.
\end{equation}
The weak formulation of problem \eqref{1.2} takes the form
\begin{equation}\label{1.3}
Q_{\sigma,\mu,\Omega_\epsilon}\left(u_\epsilon,\varphi\right)=\lambda_\epsilon
(u_{\epsilon},\varphi)_{L^2(\Gamma_{\epsilon})}, \ \text{for all} \ \varphi\in H_{L_\epsilon}^2\left(\Omega_\epsilon\right).
\end{equation}
The main goal of this paper is to investigate the behavior of $\left(\lambda_\epsilon,u_\epsilon\right)$ as $\epsilon\rightarrow{0}$. 
 
As above, we represent  the eigenvalues of problem \eqref{1.3} by means of a non-decreasing sequence $\lambda_{\epsilon , k}$, with $k\in \mathbb{N}$ where each eigenvalue is repeated as many times as its multiplicity.  We denote by $u_{\epsilon , k}$, $k\in\mathbb{N}$, a corresponding  sequence of eigenfunctions which is assumed to be orthonormal  with respect to the scalar product of $L^2(\Gamma_{\epsilon})$, that is $\int_{\Gamma_{\epsilon}}u_{\epsilon , h}u_{\epsilon , k}d\mathcal{H}^{n-1}=\delta_{hk} $ for all $h,k\in \mathbb{N}$. It turns out that the asymptotic behavior of the Steklov eigenvalues $\lambda_{\epsilon , k}$ and eigenfunctions $u_{\epsilon , k}$ as $\epsilon \to 0$ is governed by the 4th-order Sturm-Liouville problem \eqref{1.4} defined in the limiting segment $[-l,l]$. We note in particular that the eigenfunctions $v_k$, $k\in \mathbb{N}$ of this Sturm-Liouville problem, have to be normalized in the weighted space $L^2_{n,\rho}(-l,l):=L^2\left(\left(-l, l\right); (n-1)w_{n-1}\rho^{n-2}dx_1\right)$ where $w_{n-1}$ denotes the volume of the unit ball in $\mathbb{R}^{n-1}$. 

We  are ready present the main result of this paper. Note that,  since the domain of $u_{\epsilon ,k}$
changes with $\epsilon$, here we pull $u_{\epsilon}$  back to $\Omega_1$ by considering $u_{\epsilon, k}\circ \Phi_{\epsilon}$, where
 $\Phi_\epsilon: \ \Omega_1\rightarrow{\Omega_\epsilon}$ is defined by $\Phi_\epsilon(x_1,x')=(x_1,\epsilon x')$ for all $(x_1, x')\in \Omega_1$. On the other hand, the limiting eigenfunctions $u_{k}$, that depend only on the variable $x_1$, are extended  constantly in the remaining variables to the whole of $\Omega_1$. 

\begin{thm}\label{thm 1.1} 
Let $n\geq2$. Then
\[
\lambda_{\epsilon , k}\sim \lambda_k\epsilon, \ \text{as} \ \epsilon \rightarrow{0},
\]
for all $k\in \mathbb{N}$,  where $\lambda_k$ are the eigenvalues of the 4th-order Sturm-Liouville problem
\begin{equation}\label{1.4}
\left\{
\begin{array}{llll}
\displaystyle
&\left(1-\sigma^2\mathcal{N}\right)\frac{d^2}{dx_1^2}\left(\rho^{n-1}\frac{d^2V}{dx_1^2}\right)=\lambda(n-1) \rho^{n-2} V,\ \ \ {\rm in}\ \left(-l,l\right),
\vspace{1mm}\\&
V\left(-l\right)=\frac{d V}{dx_1}\left(-l\right)=0,
\vspace{1mm}\\&
V\left(l\right)=\frac{d V}{dx_1}\left(l\right)=0,
\end{array} \right.
\end{equation}
and $\mathcal{N}=(n-1)/(1-2\sigma+\sigma n)$. Moreover, there exists an orthonormal basis  of eigenfunctions $v_k$, $k\in \mathbb{N}$ of \eqref{1.4} in $L^2_{n,\rho}(-l,l)$ such that, possibly passing to a subsequence, 
\[
\epsilon^{\frac{n-2}{2}}u_{\epsilon,k} \circ \Phi_{\epsilon}\rightharpoonup v_k \ \text{in} \ H^2(\Omega_1),
\]
as $\epsilon\to0$, where $v_k$ is constantly extended in the variables $x'$.
\end{thm}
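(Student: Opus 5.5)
Pull the problem back to the fixed domain $\Omega_1$ via $\Phi_\epsilon$, and treat it as a singularly perturbed quadratic form problem. For $u$ on $\Omega_\epsilon$ set $w=u\circ\Phi_\epsilon$. Then $\partial_{x_1}u$ becomes $\partial_1 w$ and $\partial_{x'}u$ becomes $\epsilon^{-1}\partial_{x'}w$. The volume element scales by $\epsilon^{n-1}$. The surface measure on $\Gamma_\epsilon$ is $\epsilon^{n-2}\rho^{n-2}\sqrt{1+\epsilon^2\rho'^2}\,d\mathcal H^{n-2}dx_1$ on the pulled-back lateral boundary. This explains the normalization $\epsilon^{(n-2)/2}$ and the weight $(n-1)w_{n-1}\rho^{n-2}$, since $|\partial B^{n-1}|=(n-1)w_{n-1}$. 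The Rayleigh quotient of $\lambda_{\epsilon,k}/\epsilon$ is then $\epsilon^{-1}\cdot\epsilon^{n-1}\epsilon^{-(n-2)}\,\mathcal Q_\epsilon(w)/\int_{\Gamma_1}w^2(1+O(\epsilon^2))$. Here $\mathcal Q_\epsilon$ is the pulled-back energy, containing the terms $\epsilon^{-4}|D^2_{x'}w|^2$, $\epsilon^{-2}|\nabla_{x'}\partial_1 w|^2$, $|\partial_{11}w|^2$, and the trace $\mu\epsilon^{-2}\int|\partial_\nu w|^2$ (the normal is essentially radial in $x'$). The powers of $\epsilon$ cancel in the prefactor, so the expected limit is $\lambda_k$.

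\emph{Step 1 (upper bound).} Take the first $k$ eigenfunctions $V_1,\dots,V_k$ of \eqref{1.4}. Build test functions $u(x)=V(x_1)+\frac{\alpha}{2}|x'|^2V''(x_1)$, possibly with a cutoff correction near $x_1=\pm l$ to preserve $u=u_\nu=0$ on $L_\epsilon$. Choose $\alpha$ so as to minimize the pointwise energy density
\[
(1-\sigma)\bigl(V''^2+(n-1)\alpha^2V''^2\bigr)+\sigma\bigl(1+(n-1)\alpha\bigr)^2V''^2,
\]
where the $\epsilon^{-4}$ and $\epsilon^{-2}$ factors have been absorbed because $|x'|\sim\epsilon$. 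The minimization gives $\alpha=-\sigma/(1-2\sigma+\sigma n)$ and the minimum value $(1-\sigma^2\mathcal N)V''^2$, which is exactly the distortion coefficient. The term $\mu\int u_\nu^2$ is $O(\epsilon^2)$ relative to the rest and is negligible. Integrating over the cross-section (volume $w_{n-1}\epsilon^{n-1}\rho^{n-1}$) gives the numerator, and the denominator is $\epsilon^{n-2}(n-1)w_{n-1}\int\rho^{n-2}V^2$. The min-max principle then yields $\limsup\lambda_{\epsilon,k}/\epsilon\le\lambda_k$. Endpoint corrections in a layer of width $\epsilon$ near $\pm l$ should cost $o(1)$ because $V''(\pm l)$ is bounded; I will check this, since $\rho$ is only continuous up to $\pm l$.

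\emph{Step 2 (compactness and lower bound).} From the upper bound, $w_\epsilon=\epsilon^{(n-2)/2}u_{\epsilon,k}\circ\Phi_\epsilon$ satisfies
\[
\int_{\Omega_1}\bigl(\epsilon^{-4}|D^2_{x'}w_\epsilon|^2+\epsilon^{-2}|\nabla_{x'}\partial_1w_\epsilon|^2+|\partial_{11}w_\epsilon|^2\bigr)\le C,
\]
using coercivity of $Q_\sigma$ for $-1/(n-1)<\sigma<1$. Combined with the clamped conditions on $L_1$ and the normalization on $\Gamma_1$, this gives a uniform $H^2(\Omega_1)$ bound, via Poincaré-type inequalities on compact subsets away from the endpoints where $\rho$ degenerates in smoothness. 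Hence $w_\epsilon\rightharpoonup v$ weakly in $H^2$, and $\nabla_{x'}w_\epsilon\to0$, so $v=v(x_1)$ with $v\in H^2_0(-l,l)$. Compactness of the trace gives $\int_{\Gamma_1}$-orthonormality in the limit, which is the weighted $L^2_{n,\rho}$ orthonormality.

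For the lower bound, set $\epsilon^{-2}D^2_{x'}w_\epsilon\rightharpoonup Z$ weakly in $L^2$. Lower semicontinuity gives
\[
\liminf \ge \int(1-\sigma)\bigl(v''^2+|Z|^2\bigr)+\sigma(v''+\mathrm{tr}Z)^2 .
\]
Pointwise minimization over symmetric $Z$, attained at $Z=\alpha v''I$, gives at least $(1-\sigma^2\mathcal N)v''^2$. The mixed terms $\epsilon^{-1}\nabla_{x'}\partial_1w_\epsilon$ contribute nonnegatively and can be dropped. Passing to the limit in the weak formulation \eqref{1.3} with test functions of the Step 1 form shows that $v$ solves \eqref{1.4} with eigenvalue $\lim\lambda_{\epsilon,k}/\epsilon$. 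A standard argument then identifies the limits as $\lambda_k$ and yields the basis $v_k$, treating multiplicities through orthogonality and induction on $k$.

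\emph{Main obstacle.} The main difficulty is Step 2's passage to the limit in the variational equation rather than just in the energy. One must show that the limit of $\epsilon^{-2}D^2_{x'}w_\epsilon$ is exactly $\alpha v''I$; equivalently, the limit Euler–Lagrange equation must come out with the reduced coefficient. I would handle this by testing \eqref{1.3} with $\varphi=\psi(x_1)+\epsilon^2\frac{\beta}{2}|x'|^2\chi(x_1)$ for arbitrary $\psi,\chi$. The $\chi$ equation forces $Z$ to be a multiple of the identity with the optimal $\alpha$, and the $\psi$ equation then gives the weak form of \eqref{1.4}. The uniform $H^2$ bound near the endpoints, where $\rho$ is only $C^0$, also needs care.
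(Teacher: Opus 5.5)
Your proposal is correct in outline, but it follows a different route from the paper. The paper never constructs competitors. It adds the penalty $\epsilon u$ and works with the rescaled resolvents $B_\epsilon f_\epsilon=\epsilon u_\epsilon$ on $L^2(\Gamma_\epsilon;\epsilon^{-2}d\mathcal H^{n-1})$ and $B_0$ on $L^2_{n,\rho}(-l,l)$. It proves $B_\epsilon\xrightarrow{C}B_0$ (Lemma~\ref{lemma 3.2}) by passing to the limit in the equation with data (Lemma~\ref{l 3.1}). It then reads off eigenvalues, eigenfunctions and multiplicities at once from the abstract Theorem~\ref{th 2.5}. The reduced coefficient arises there exactly as in your ``$\chi$-test'': testing with $\theta(x_1)\sum_{i\ge2}x_i^2$ and multiplying by $\epsilon^2$ gives \eqref{3.14}. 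That is the cross-sectional mean of the weak limit of $\epsilon^{-2}\sum_{i\ge2}\partial^2_{x_i}v_\epsilon$, which is the only information entering the equation tested with $\psi(x_1)$.

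Your $\Gamma$-convergence/min-max route has three ingredients: the ansatz $V+\frac{\alpha}{2}|x'|^2V''$ for the upper bound; weak lower semicontinuity plus pointwise minimization over the transverse Hessian for the lower bound; and orthonormality preserved by compactness of the trace. It buys a transparent reading of $1-\sigma^2\mathcal N$ as the energy left after the optimal transverse bending correction $\alpha=-\sigma/(1-2\sigma+\sigma n)$, and it dispenses with the penalty and the resolvent machinery. The paper's route buys the full spectral statement, including (iii) of Theorem~\ref{th 2.5}, with no recovery sequence and hence no regularity demands on the limit eigenfunctions near $x_1=\pm l$.

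A few points to adjust.

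\textbf{The $\chi$-test.} It does not force $Z=\alpha v''I$; it only fixes $\int_{B_{\rho(x_1)}}\mathrm{tr}Z\,dx'$. That is enough. In your scheme the Euler--Lagrange step is in fact redundant. Let $R_0$ be the Rayleigh quotient of \eqref{1.4}. Lower semicontinuity and your upper bound give $R_0(v_j)\le\liminf_{\epsilon\to0}\lambda_{\epsilon,j}/\epsilon\le\lambda_j$. Since $v_1,\dots,v_j$ are orthonormal in $L^2_{n,\rho}(-l,l)$, induction shows that $v_j$ minimizes $R_0$ on $\{v_1,\dots,v_{j-1}\}^\perp$. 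Hence $v_j$ is an eigenfunction with eigenvalue $\lambda_j$, and $\lambda_{\epsilon,j}/\epsilon\to\lambda_j$.

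\textbf{The upper bound.} Your ansatz does not vanish on $L_\epsilon$, since in general $V''(\pm l)\neq0$. Moreover $V'''$ and $V''''$ need not be controlled near $\pm l$, because $\rho$ is smooth only in the interior. Instead of an $\epsilon$-layer correction, take $V\in C^\infty_c(-l,l)$, use density in $H^2_0(-l,l)$, and finish with a diagonal argument.

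\textbf{Compactness.} The Hessian bounds only give $\nabla_{x'}v$ constant. The affine part is removed by the clamping on $L_1$ (weak closedness of $H^2_{L_1}(\Omega_1)$) or, as in the paper, by the $\mu$-term. The uniform $H^2(\Omega_1)$ bound follows directly from the norm equivalence on the fixed domain $\Omega_1$; no localization away from the endpoints is needed.

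\textbf{Order of the $\mu$-term.} For your test functions it is $O(\epsilon)$, not $O(\epsilon^2)$, relative to the bulk energy. It is still negligible.
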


Note the appearance of the distortion factor $1-\sigma^2\mathcal{N}$ in front of the $4$th-order operator in the limiting problem when $\sigma \ne0$. In the case $n=2$ this factor was found in \cite{PDL}.

 We also note that while we have formulated our problem in the spirit of  \cite{BD} to enable comparison, our method of proof is entirely different. In particular, here we prove the spectral convergence of the ($\epsilon$-rescaled) $n$-dimensional Steklov problem to the 4th-order Sturm-Liouville problem in the sense of Vainnikko~\cite{GMV}.

\section{Preliminaries and notation}\label{sect. 2}
\noindent
When considering the biharmonic Steklov problem in the domain $\Omega_\epsilon$, the corresponding Hilbert spaces $\mathcal{H}_\epsilon$ depend on $\epsilon$, complicating the direct application of the standard notion of compact convergence. To bypass this, we employ suitable connecting systems that facilitate the transition from the variable Hilbert spaces defined on $\Omega_\epsilon$ to the fixed limiting Hilbert space defined on $\Omega_1$. This methodology integrates various concepts and results from the works of Stummel \cite{FS} and Vainikko \cite{GMV}, which have been further elaborated in \cite{JMA, AC}. Notably, we utilize the concept of $E$-compact convergence.

In the spirit of \cite{JMA}, we denote by $\mathcal{H}_\epsilon$ a family of Hilbert spaces for $\epsilon\in [0, \epsilon_0]$ and assume the existence of a family of linear operators $E_\epsilon: \  \mathcal{H}_0 \rightarrow \mathcal{H}_\epsilon$ such that
\begin{equation}\label{2.1}
\|E_\epsilon f\|_{\mathcal{H}_\epsilon}\xrightarrow{\epsilon\rightarrow 0}\|f\|_{\mathcal{H}_0}, \ \text{for all} \ f\in\mathcal{H}_0.
\end{equation}

\begin{defi}\label{def 2.1}
We say that a family $\{f_\epsilon\}_{0<\epsilon\leq\epsilon_0}$, with $f_\epsilon\in \mathcal{H}_\epsilon, E$-converges to $f\in \mathcal{H}_0$ if $\|f_\epsilon-E_\epsilon f\|_{\mathcal{H}_\epsilon}\rightarrow0$
as $\epsilon\rightarrow0$. We write this as $f_\epsilon\xrightarrow{E}f$.
\end{defi}

\begin{defi}\label{def 2.2}
Let $\{B_\epsilon\in\mathcal{L}\left(\mathcal{H}_\epsilon\right): \ \epsilon\in(0,\epsilon_0]\}$ be a family of linear and continuous operators. We say
that $\{B_\epsilon\}_{0<\epsilon\leq\epsilon_0}$ $E$-converges to $B_0\in\mathcal{L}\left(\mathcal{H}_0\right)$ as $\epsilon\rightarrow0$ if $B_\epsilon f_\epsilon\xrightarrow{E}B_0f$ whenever $f_\epsilon\xrightarrow{E}f$. We write this as
$B_\epsilon\xrightarrow{EE}B_0$.
\end{defi}

\begin{defi}\label{def 2.3}
Let $\{f_\epsilon\}_{0<\epsilon\leq\epsilon_0}$ be a family such that $f_\epsilon\in \mathcal{H}_\epsilon$. We say that $\{f_\epsilon\}_{0<\epsilon\leq\epsilon_0}$ is precompact if
for any sequence $\epsilon_n\rightarrow0$ there exist a subsequence $\{\epsilon_{n_k}\}_{k\in\mathbb{N}}$ and $f\in \mathcal{H}_0$ such that $f_{\epsilon_{n_k}}\xrightarrow{E}f$ as $k\to\infty$.
\end{defi}

\begin{defi}\label{def 2.4}
We say that $\{B_\epsilon\}_{0<\epsilon\leq\epsilon_0}$ with $B_\epsilon\in\mathcal{L}\left(\mathcal{H}_\epsilon\right)$ and $B_\epsilon$ compact, converges compactly to a
compact operator $B_0\in\mathcal{L}\left(\mathcal{H}_0\right)$ if $B_\epsilon\xrightarrow{EE}B_0$ and for any family $\{f_\epsilon\}_{0<\epsilon\leq\epsilon_0}$ such that $f_\epsilon\in \mathcal{H}_\epsilon$, $\|f_\epsilon\|_{\mathcal{H}_\epsilon}=1$,
we have that $\{B_\epsilon f_\epsilon\}_{0<\epsilon\leq\epsilon_0}$ is precompact in the sense of Definition \ref{def 2.3}. We write this as $B_\epsilon\xrightarrow{C}B_0$.
\end{defi}

The $E$-compact convergence implies spectral stability. Namely, we have the following result where by `generalized eigenfunction' associated to $m$ eigenvalues  we mean a linear combination of $m$ eigenfunctions associated to those egenvalues.

\begin{thm}\label{th 2.5}
Let $\{B_\epsilon\}_{0\le \epsilon\leq\epsilon_0}$ be a family of non-negative, compact self-adjoint operators in the
Hilbert spaces $\mathcal{H}_\epsilon$. Assume that their eigenvalues are given by $\{\lambda_k(\epsilon)\}^{\infty}_{k=1}$. If $B_\epsilon\xrightarrow{C}B_0$, then there is spectral convergence of $B_\epsilon$ to $B_0$ as $\epsilon\rightarrow0$.
In particular, the following statements hold: 
\begin{itemize}
 \item[(i)] For every $k\in  \mathbb{N}$ we have $\lambda_k(\epsilon )\to \lambda_k(0)$ as $\epsilon \to 0$.
\item[(ii)] If $u_k(\epsilon)$, $k\in \mathbb{N}$, is an orthonormal sequence of eigenfunctions associated with the eigenvalues  $\lambda_k(\epsilon )$ then
 there exists an orthonormal sequence of eigenfunctions  $u_k(0)$, $k\in  \mathbb{N}$ associated with  $\lambda_k(0 )$, $k\in  \mathbb{N}$ such that, possibly passing to a subsequence, $u_k(\epsilon )\xrightarrow{E} u_k(0)$.
 \item[(iii)]  Given  $m$ eigenvalues $\lambda_k(0),  \dots , \lambda_{k+m-1}(0)$ with
$\lambda_k(0)\ne \lambda_{k-1}(0)$ and $\lambda_{k+m-1}(0)$ $\ne \lambda_{k+m}(0)$
 and corresponding orthonormal eigenfunctions $u_k(0),\dots,u_{k+m-1}(0)$
 there exist $m$ orthonormal   generalized eigenfunctions   $v_k(\epsilon ),  \dots , v_{k+m-1}(\epsilon )$  associated with  $\lambda_k(\epsilon ),  \dots ,  
  \lambda_{k+m-1}(\epsilon )$   such that $v_{k+j}(\epsilon )\xrightarrow{E} u_{k+j}(0)$  for all $j=0, 1,\dots , m-1$.
 \end{itemize}    
\end{thm}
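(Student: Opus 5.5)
The plan is to follow the classical Vainikko/Stummel route via Riesz spectral projections, as in \cite{GMV, JMA, AC}.

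\textbf{Inner products pass to the limit.} By polarization, \eqref{2.1} gives $(E_\epsilon f,E_\epsilon g)_{\mathcal{H}_\epsilon}\to(f,g)_{\mathcal{H}_0}$. Hence, if $f_\epsilon\xrightarrow{E}f$ and $g_\epsilon\xrightarrow{E}g$, then $(f_\epsilon,g_\epsilon)_{\mathcal{H}_\epsilon}\to(f,g)_{\mathcal{H}_0}$. In particular, $E$-limits of orthonormal families are orthonormal, and $E$-limits are unique.

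\textbf{Resolvents converge uniformly on compact sets away from the spectrum.} Fix $z\ne0$ with $z\notin\sigma(B_0)$. I claim there is $C$ with $\|(B_\epsilon-z)^{-1}\|\le C$ for $\epsilon$ small. If not, there are $\|u_\epsilon\|=1$ with $\|(B_\epsilon-z)u_\epsilon\|\to0$. Then $u_\epsilon=z^{-1}\bigl(B_\epsilon u_\epsilon-(B_\epsilon-z)u_\epsilon\bigr)$. Precompactness of $B_\epsilon u_\epsilon$ gives a subsequence with $u_\epsilon\xrightarrow{E}u$ and $\|u\|=1$. Since $B_\epsilon\xrightarrow{EE}B_0$, we get $(B_0-z)u=0$, a contradiction.

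With this bound in hand, the same compactness argument shows $(B_\epsilon-z)^{-1}\xrightarrow{EE}(B_0-z)^{-1}$. The identity $(B_\epsilon-z)^{-1}=z^{-1}\bigl(B_\epsilon(B_\epsilon-z)^{-1}-I\bigr)$ shows the resolvents also map bounded families to precompact ones, up to the $E$-convergent part. The argument is uniform for $z$ in a compact set $K\subset\mathbb{C}\setminus(\sigma(B_0)\cup\{0\})$, arguing by contradiction with a sequence $z_\epsilon\to z$. In particular, $B_\epsilon$ has no spectrum in $K$ for small $\epsilon$.

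\textbf{Convergence of spectral projections and dimensions.} Let $\lambda\ne0$ be an eigenvalue of $B_0$ of multiplicity $m$, and let $\gamma$ be a small circle around it isolating it from the rest of $\sigma(B_0)\cup\{0\}$. Define $P_\epsilon=-\frac{1}{2\pi i}\oint_\gamma(B_\epsilon-z)^{-1}dz$. The previous step, with dominated convergence on $\gamma$, gives $P_\epsilon\xrightarrow{EE}P_0$, together with the precompactness property.

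\begin{itemize}
\item $\operatorname{rank}P_\epsilon\ge m$: take an orthonormal basis $u_j(0)$ of $\operatorname{Ran}P_0$. Then $P_\epsilon E_\epsilon u_j(0)\xrightarrow{E}u_j(0)$, and these vectors are asymptotically orthonormal by the first step.
\item $\operatorname{rank}P_\epsilon\le m$: choose orthonormal eigenvectors of $B_\epsilon$ in $\operatorname{Ran}P_\epsilon$. They equal $\mu_\epsilon^{-1}B_\epsilon(\cdot)$ with $\mu_\epsilon$ inside $\gamma$, hence are precompact. Their limits are orthonormal and lie in $\operatorname{Ran}P_0$, so there are at most $m$ of them.
\end{itemize}

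Thus exactly $m$ eigenvalues of $B_\epsilon$, counted with multiplicity, lie inside $\gamma$. Letting the radius shrink, they converge to $\lambda$.

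\textbf{Item (i).} The operators are nonnegative and compact, so I list the eigenvalues in decreasing order. The counting above applies on every circle, and the resolvent bound excludes spectrum in any compact $K$ as above. Combining the two, for each threshold $\delta>0$ not in $\sigma(B_0)$ the number of eigenvalues of $B_\epsilon$ larger than $\delta$ eventually equals that of $B_0$. Part (i) follows from this. If $\lambda_k(0)=0$, nonnegativity and this counting force $\lambda_k(\epsilon)\to0$.

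\textbf{Item (ii).} For $\lambda_k(0)>0$, write $u_k(\epsilon)=\lambda_k(\epsilon)^{-1}B_\epsilon u_k(\epsilon)$. This family is precompact, so a diagonal subsequence gives $u_k(\epsilon)\xrightarrow{E}u_k(0)$ for all $k$. Passing to the limit in $B_\epsilon u_k(\epsilon)=\lambda_k(\epsilon)u_k(\epsilon)$ shows $u_k(0)$ is an eigenfunction for $\lambda_k(0)$, and the first step gives orthonormality.

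\textbf{Item (iii).} Take $P_\epsilon$ for a circle enclosing exactly $\lambda_k(0),\dots,\lambda_{k+m-1}(0)$. Apply Gram--Schmidt to $P_\epsilon E_\epsilon u_{k+j}(0)$; since the Gram matrix tends to the identity, the resulting vectors still $E$-converge to $u_{k+j}(0)$.

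\textbf{Main obstacle.} I expect the delicate step to be the uniform resolvent bound on $\gamma$ and the identification of limits across the varying spaces $\mathcal{H}_\epsilon$. Both rest on the compactness-plus-uniqueness-of-$E$-limit argument above, which has to be run carefully with $z_\epsilon\to z$.
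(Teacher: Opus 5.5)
Your proposal is correct and takes essentially the approach the paper relies on. The paper does not prove Theorem~\ref{th 2.5} itself but defers to Vainikko \cite{GMV} and to \cite{JMA, AC, AFPDL, SS}, where spectral convergence is obtained as you do: uniform resolvent bounds from compact convergence, $E$-convergence of Riesz projections, and the count $\operatorname{rank}P_\epsilon=\operatorname{rank}P_0$.

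A few points are left implicit but are routine. You need uniform boundedness of $E_\epsilon$ and of $\|B_\epsilon\|$ for small $\epsilon$; the latter follows from precompactness. You also need to complexify the real spaces to use contour integrals. Your restriction to $\lambda_k(0)>0$ in (ii) is actually necessary, since (ii) can fail on $\ker B_0$, but it is harmless here because the paper's $B_0$ is injective.
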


We refer to \cite[Theorem~2.5]{AFPDL}, \cite[Theorem~6.3]{GMV}, \cite[Theorem~1]{SS}, see also \cite[Theorem~4.10]{JMA}, \cite[Theorem~5.1]{EA} and
\cite[Theorem~3.3]{AC} 
for more details on spectral convergence. 

In this paper, we apply Theorem \ref{th 2.5} to the resolvent operators associated with the given problems. Before doing this, we find it convenient to add a penalty term in the equation under consideration. Namely,  we set  $\overline{\lambda}_\epsilon:=\lambda_\epsilon+\epsilon$ and we consider the problem 
\begin{equation}\label{2.2}
\left\{
\begin{array}{ll}
\displaystyle
\Delta^2u_\epsilon=0,& \ \text{in} \ \Omega_\epsilon,\\
\left(1-\sigma\right)(u_\epsilon)_{\nu_\epsilon\nu_\epsilon}+\sigma\Delta u_\epsilon=-\mu (u_\epsilon)_{\nu_\epsilon},& \ \text{on} \ \Gamma_\epsilon,
\\
-\left(1-\sigma\right)\text{div}_{\Gamma_\epsilon}\left(D^2u_\epsilon\cdot\nu_\epsilon\right)_{\Gamma_\epsilon}-\left(\Delta u_\epsilon\right)_{\nu_\epsilon}+\epsilon u_\epsilon=\overline{\lambda}_\epsilon u_\epsilon,& \ \text{on} \ \Gamma_\epsilon,\\
u_\epsilon=(u_\epsilon)_{\nu_\epsilon}=0,& \ \text{on} \ L_\epsilon.\\
\end{array} \right.
\end{equation}
Note that the variational formulation of problem \eqref{2.2} is immediately deduced by   \eqref{1.3} and is expressed by
\begin{equation}\label{2.3}
Q_{\sigma,\mu,\Omega_\epsilon}\left(u_\epsilon,\varphi\right)+\epsilon (u_\epsilon,\varphi)_{L^2(\Gamma_{\epsilon})} =\overline{\lambda}_\epsilon
(u_\epsilon,\varphi)_{L^2(\Gamma_{\epsilon})}
   , \ \text{for all} \ \varphi\in H_{L_\epsilon}^2\left(\Omega_\epsilon\right).
\end{equation}
Clearly, studying the asymptotic behavior of $\lambda_{\epsilon}$ is equivalent to studying the behavior of  $\overline{\lambda}_\epsilon$ as $\epsilon\rightarrow{0}$.
To do so, we plan to apply Theorem~\ref{th 2.5} to the resolvent operator associated with problem \eqref{2.2}.  Namely,  we consider the following problem with the datum $f_\epsilon\in L^2(\Gamma_\epsilon)$
\begin{equation}\label{2.4}
\left\{
\begin{array}{ll}
\displaystyle
\Delta^2u_{\epsilon}=0,& \ \text{in} \ \Omega_\epsilon,\\
\left(1-\sigma\right)(u_{\epsilon})_{\nu_\epsilon\nu_\epsilon}+\sigma\Delta u_{\epsilon}=-\mu (u_{\epsilon})_{\nu_\epsilon},& \ \text{on} \ \Gamma_\epsilon,
\\
-\left(1-\sigma\right)\text{div}_{\Gamma_\epsilon}\left(D^2u_{\epsilon}\cdot\nu_\epsilon\right)_{\Gamma_\epsilon}-\left(\Delta u_{\epsilon}\right)_{\nu_\epsilon}+\epsilon u_{\epsilon}=f_{\epsilon},& \ \text{on} \ \Gamma_\epsilon.\\
u_{\epsilon}=(u_{\epsilon})_{\nu_\epsilon}=0,& \ \text{on} \ L_\epsilon,
\end{array} \right.
\end{equation}
and we study the behavior of the solution $u_{\epsilon}$ upon variation of $f_{\epsilon}$. 

Note that the weak formulation of this problem is 
\begin{equation}\label{2.5}
Q_{\sigma,\mu,\Omega_\epsilon}\left(u_\epsilon,\phi\right)+\epsilon
(u_\epsilon,\phi)_{L^2(\Gamma_{\epsilon})}
=
(f_\epsilon,\phi)_{L^2(\Gamma_{\epsilon})}, \ \text{for all} \ \phi\in H_{L_\epsilon}^2\left(\Omega_\epsilon\right).    
\end{equation}

The appropriate one-dimensional limiting problem with the datum $f\in L^2\left(-l, l\right)$ turns out to be 
\begin{equation}\label{2.6}
\left\{
\begin{array}{llll}
\displaystyle
&\left(1-\sigma^2\mathcal{N}\right)\frac{d^2}{dx_1^2}\left(\rho^{n-1}\frac{d^2V}{dx_1^2}\right)+(n-1)\rho^{n-2} V
=(n-1) \rho^{n-2} f, \ \ \ {\rm in}\ \left(-l,l\right),
\vspace{1mm}\\&
V\left(-l\right)=\frac{d V}{dx_1}\left(-l\right)=0,
\vspace{1mm}\\&
V\left(l\right)=\frac{d V}{dx_1}\left(l\right)=0,
\end{array} \right.
\end{equation}
see Lemma~\ref{l 3.1}.

We conclude this section by discussing the coercivity in $H^2(\Omega)$ of the quadratic forms under consideration.

Let $\Omega$ be a bounded domain in $\mathbb{R}^n$ of class $C^{0,1}$. The Sobolev space $H^2\left(\Omega\right)$ is naturally endowed with the norm $\left(\|D^2u\|^2_{L^2\left(\Omega\right)}+\|u\|^2_{L^2\left(\Omega\right)}\right)^{\frac{1}{2}}$.
Note that in \cite{LMC} it was proven that the quadratic form $Q_{\sigma,\Omega}$ is coercive
in $H^2\left(\Omega\right)$ and, moreover, $\left(Q_{\sigma,\Omega}\left(u,u\right)+\|u\|^2_{L^2\left(\Omega\right)}\right)^{\frac{1}{2}}$ is equivalent to the standard norm in $H^2\left(\Omega\right)$. More precisely, if $-1/(n-1)<\sigma <1$, there exists a positive constant $c(n,\sigma)$ depending only on $n$ and $\sigma$ such that  
\begin{equation}
\label{cha}Q_{\sigma,\Omega}\left(u,u\right)\geq c(n,\sigma )\int_{\Omega}|D^2u|^2dx,
\end{equation} 
for all $u\in H^2(\Omega)$.
 In \cite{PDLLP} it is pointed out that $\left(\|D^2u\|^2_{L^2\left(\Omega\right)}+\|u\|^2_{L^2\left(\partial\Omega\right)}\right)^{\frac{1}{2}}$ is equivalent to the standard norm of $H^2\left(\Omega\right)$. Therefore, by \eqref{cha} we can deduce that  $\left(Q_{\sigma,\Omega}\left(u,u\right)+\|u\|^2_{L^2\left(\partial\Omega\right)}\right)^{\frac{1}{2}}$ is also equivalent to the standard norm of $H^2\left(\Omega\right)$.


\section{Proof of Theorem \ref{thm 1.1}}\label{sect. 3}
\noindent
In this section, we prove Theorem ~\ref{thm 1.1}. 

\subsection{Finding the limiting problem}\label{SSec 3.1}
\noindent
We proceed as in \cite{BD} and we write the weak formulation \eqref{2.5} of problem \eqref{2.4} in the form
\begin{equation}\label{3.1}
\begin{aligned}
(1-\sigma)&\int_{\Omega_\epsilon}\left(\frac{\partial^2 u_{\epsilon}}{\partial x^2_1}\frac{\partial^2\phi}{\partial x^2_1}
+2\sum_{i=2}^{n}\frac{\partial^2 u_{\epsilon}}{\partial x_1 \partial x_i}\frac{\partial^2 \phi}{\partial x_1 \partial x_i}
+\sum_{i,j=2}^{n}\frac{\partial^2 u_{\epsilon}}{\partial x_i \partial x_j}\frac{\partial^2\phi}{\partial x_i \partial x_j}\right)dx    
\\&+
\sigma\int_{\Omega_\epsilon}\left(\frac{\partial^2 u_{\epsilon}}{\partial x^2_1}
+\sum_{i=2}^{n}\frac{\partial^2 u_{\epsilon}}{\partial x^2_i}\right)
\left(\frac{\partial^2\phi}{\partial x^2_1}
+\sum_{i=2}^{n}\frac{\partial^2\phi}{\partial x^2_i}\right)dx
\\&+
\mu \int_{\Gamma_{\epsilon}}  
 \left(\frac{\partial u_{\epsilon}}{\partial x_1}\tilde{\nu}_{\epsilon,1}
+\sum_{i=2}^{n}\frac{\partial u_{\epsilon}}{\partial x_i}\tilde{\nu}_{\epsilon,i}\right)
\left(\frac{\partial \phi}{\partial x_1}\tilde{\nu}_{\epsilon,1}
+\sum_{i=2}^{n}\frac{\partial \phi}{\partial x_i}\tilde{\nu}_{\epsilon,i}\right)d\mathcal{H}^{n-1}
\\&+\epsilon
\int_{\Gamma_{\epsilon}}u_{\epsilon}  \phi d\mathcal{H}^{n-1}
=
\int_{\Gamma_\epsilon}f_{\epsilon}  \phi
d\mathcal{H}^{n-1}, \ \text{for all} \ \phi\in H_{L_\epsilon}^2\left(\Omega_\epsilon\right).
\end{aligned}
\end{equation}

We note that the points of  $\Gamma_{\epsilon }$ can be represented as follows
$$(x_1,\epsilon \rho (x_1) x'(\varphi_1,\dots \varphi_{n-3}, \theta))$$
where 
$$x'(\varphi_1,\dots \varphi_{n-3}, \theta)=(x_2(\varphi_1,\dots \varphi_{n-3}, \theta), \dots ,x_n(\varphi_1,\dots \varphi_{n-3}, \theta )$$ 
are written by means of the spherical coordinates of the $n-2$-dimensional unit sphere of ${\mathbb{R}}^{n-1}$ in the form 
$$
\left\{ 
\begin{array}{l}
x_2(\varphi_1,\dots \varphi_{n-3}, \theta)=\cos \varphi_1\\
x_3(\varphi_1,\dots \varphi_{n-3}, \theta)=\sin  \varphi_1\cos \varphi_2\\
x_4(\varphi_1,\dots \varphi_{n-3}, \theta)=\sin  \varphi_1\sin\varphi_2\cos \varphi_3\\
\cdots \cdots \\
\cdots \cdots \\
x_{n-1}(\varphi_1,\dots \varphi_{n-3}, \theta)=\sin  \varphi_1\cdots \sin\varphi_{n-3}\cos \theta\\
x_{n}(\varphi_1,\dots \varphi_{n-3}, \theta)=\sin  \varphi_1\cdots \sin\varphi_{n-3}\sin \theta
\end{array}
\right.
$$
for all $(\varphi_1,\dots ,\varphi_{n-3},\theta)\in [0,\pi)^{n-3}\times [0,2\pi)$. Moreover, one can verify that  the surface measure on $\Gamma_{\epsilon}$ can be written  using these coordinates as 
$$
d\mathcal{H}^{n-1} =\epsilon^{n-2}\rho^{n-2}\sqrt{1+\epsilon^2\rho'^2}dx_1dS (\varphi_1,...,\varphi_{n-3},\theta )$$
where  
$$
dS (\varphi_1,...,\varphi_{n-3},\theta )=\sin\varphi_1^{n-3}\sin\varphi_2^{n-4}\cdots \sin\varphi_{n-3}d\varphi_1\cdots d\varphi_{n-3}d\theta 
$$
is the surface measure of the $n-2$-dimensional unit sphere of ${\mathbb{R}}^{n-1}$.

It is understood that for $n=3$, only the parameter  $\theta $ is involved, whereas for $n=2$ the points of the $\Gamma_{\epsilon}$ are represented by $(x_1,\pm \epsilon \rho (x_1))$ and 
we simply have
$$
d\mathcal{H}^{1}=\sqrt{1+\epsilon^2\rho'^2}dx_1,
$$
the $n-2$-dimensional measure being  the counting measure.  We refer e.g., to \cite[Ex.~65 Ch.~2]{folland}.  

Morever, $\tilde{\nu}_{\epsilon} = (\tilde{\nu}_{\epsilon,1}, \ldots, \tilde{\nu}_{\epsilon,n})$ denotes the unit outer normal  to the boundary $\Gamma_\epsilon$. We note that
\[
\tilde{\nu}_{\epsilon}=\frac{-\epsilon\rho' {\bf{e}}_1+\tilde{\nu}}{\sqrt{1+\epsilon^2\rho'^2}},
\]
where ${\bf{e}}_1=(1,0,\dots , 0)$, $\tilde{\nu}$ is the unit outer normal  to the boundary of the $(n-1)$-dimensional ball 
$$B_{\epsilon}:=B^{n-1}_{\epsilon \rho(x_1)}(x_1)$$
centered at $(x_1,0, \dots 0)$ and with radius $\epsilon \rho(x_1)$. Importantly, $\tilde{\nu}$ is independent of $\epsilon$.

For all $(x_1,x')\in \Omega_1$ we define $v_{\epsilon}(x_1,x')=\epsilon^{\frac{n-2}{2}}u_{\epsilon}(x_1,\epsilon x')$, $g_{\epsilon}(x_1,x')=\epsilon^{\frac{n-2}{2}}f_{\epsilon}(x_1,\epsilon x')$ and $\psi(x_1,x')=\epsilon^{\frac{n-2}{2}}\phi(x_1,\epsilon x')$ in \eqref{3.1}. Then for the function $v_{\epsilon}$ we derive the problem
\begin{equation}\label{3.2}
\begin{aligned}
(1&-\sigma)\int_{\Omega_1}\left(\frac{\partial^2 v_{\epsilon}}{\partial x^2_1}\frac{\partial^2\psi}{\partial x^2_1}
+\frac{2}{\epsilon^2}\sum_{i=2}^{n}\frac{\partial^2 v_{\epsilon}}{\partial x_1 \partial x_i}\frac{\partial^2 \psi}{\partial x_1 \partial x_i}
+\frac{1}{\epsilon^4}\sum_{i,j=2}^{n}\frac{\partial^2 v_{\epsilon}}{\partial x_i \partial x_j}\frac{\partial^2\psi}{\partial x_i \partial x_j}\right)dx    
\\&+
\sigma\int_{\Omega_1}\left(\frac{\partial^2 v_{\epsilon}}{\partial x^2_1}
+\frac{1}{\epsilon^2}\sum_{i=2}^{n}\frac{\partial^2 v_{\epsilon}}{\partial x^2_i}\right)
\left(\frac{\partial^2\psi}{\partial x^2_1}
+\frac{1}{\epsilon^2}\sum_{i=2}^{n}\frac{\partial^2\psi}{\partial x^2_i}\right)dx
\\&+
\frac{\mu}{\epsilon}\int_{\Gamma_1}
 \left(-\epsilon\rho'\frac{\partial v_{\epsilon}}{\partial x_1}
+\frac{1}{\epsilon}\sum_{i=2}^{n}\frac{\partial v_{\epsilon}}{\partial x_i}\tilde{\nu}_i\right)
\left(-\epsilon\rho'\frac{\partial \psi}{\partial x_1}
+\frac{1}{\epsilon}\sum_{i=2}^{n}\frac{\partial \psi}{\partial x_i}\tilde{\nu}_i\right)\frac{d\mathcal{H}_1}{1+\epsilon^2\rho'^2}
\\&+
\int_{\Gamma_1}
v_{\epsilon}\psi
d\mathcal{H}_1
=
\frac{1}{\epsilon}\int_{\Gamma_1}
g_{\epsilon}\psi
d\mathcal{H}_1,
\end{aligned}
\end{equation} 
where we have set 
\begin{equation}\label{3.3}
d\mathcal{H}_1:=
\rho^{n-2}\sqrt{1+\epsilon^2\rho'^2}dx_1dS (\varphi_1,...,\varphi_{n-3},\theta ).    
\end{equation}

We note in passing that by \eqref{cha}, and by changing variables in integrals,  it follows that 
\begin{eqnarray}\label{cha1} \lefteqn{
(1-\sigma)\int_{\Omega_1}\left(\left|\frac{\partial^2 v_{\epsilon}}{\partial x^2_1}\right|^2
+\frac{2}{\epsilon^2}\sum_{i=2}^{n}\left| \frac{\partial^2 v_{\epsilon}}{\partial x_1 \partial x_i}\right|^2
+\frac{1}{\epsilon^4}\sum_{i,j=2}^{n}\left|\frac{\partial^2 v_{\epsilon}}{\partial x_i \partial x_j}\right|^2\right)dx    }
\nonumber \\ & & +
\sigma\int_{\Omega_1}\left|\frac{\partial^2 v_{\epsilon}}{\partial x^2_1}
+\frac{1}{\epsilon^2}\sum_{i=2}^{n}\frac{\partial^2 v_{\epsilon}}{\partial x^2_i}\right|^2
dx\nonumber \\
& & \geq c(n,\sigma ) \int_{\Omega_1}\left(\left|\frac{\partial^2 v_{\epsilon}}{\partial x^2_1}\right|^2
+\frac{2}{\epsilon^2}\sum_{i=2}^{n}\left| \frac{\partial^2 v_{\epsilon}}{\partial x_1 \partial x_i}\right|^2
+\frac{1}{\epsilon^4}\sum_{i,j=2}^{n}\left|\frac{\partial^2 v_{\epsilon}}{\partial x_i \partial x_j}\right|^2\right)dx.
\end{eqnarray}
Then we can prove the following lemma. Note that in this proof and in the rest of the paper,  we use $C$ to denote a positive constant independent of $\epsilon$ that may vary from line to line.
\begin{lemma}\label{l 3.1}
Assume that
\begin{equation}\label{3.4} 
\sup_{\epsilon >0}
\|\epsilon^{-1}g_{\epsilon}\|_{{L^2\left(\Gamma_1\right)}}\ne \infty,
\end{equation}    
and
\begin{equation}\label{3.5}
\epsilon^{-1}g_\epsilon \rightharpoonup g \ \text{in} \ L^2\left(\Gamma_1\right),   
\end{equation}
as $\epsilon\to 0$. 
Then the solution $v_{\epsilon}$ of problem \eqref{3.2} converges to the solution of the problem
\begin{equation}\label{3.6} 
\left\{
\begin{array}{llll}
\displaystyle
&\left(1-\sigma^2\mathcal{N}\right)\frac{d^2}{dx_1^2}\left(\rho^{n-1}\frac{d^2V}{dx_1^2}\right)+(n-1)\rho^{n-2} V
 =(n-1)\rho^{n-2}\mathcal{M}(g), \ \ \ {\rm in }\ \left(-l,l\right),
\vspace{1mm}\\&
V\left(-l\right)=\frac{d V}{dx}\left(-l\right)=0,
\vspace{1mm}\\&
V\left(l\right)=\frac{d V}{dx}\left(l\right)=0,
\end{array} \right.
\end{equation}
as $\epsilon\rightarrow{0}$, in the sense that $v_{\epsilon}\rightharpoonup V$ weakly in  $H^2(\Omega_1)$ and $(v_{\epsilon})_\nu\rightarrow V_\nu$ strongly in  $L^2(\Gamma_1)$. Here
\[
(\mathcal{M}(g))(x_1):=\fint_{\partial B_1}g(x_1,\cdot ) d\mathcal{H}^{n-2}= \frac{1}{(n-1)w_{n-1}\rho^{n-2}}\int_{\partial B_1}g(x_1,\cdot ) d\mathcal{H}^{n-2} ,
\]
 where for $n=2$ it is understood that  
 \[
(\mathcal{M}(g))(x_1)=\frac{1}{2}\left(g(x_1,\rho(x_1))+g(x_1,-\rho(x_1))\right).
\]

In particular, if $g$ depends only on the $x_1$ variable, then the equation in \eqref{3.6} reads
\[\left(1-\sigma^2\mathcal{N}\right)\frac{d^2}{dx_1^2}\left(\rho^{n-1}\frac{d^2V}{dx_1^2}\right)+(n-1)\rho^{n-2} V
 =(n-1)\rho^{n-2}g.
 \]
\end{lemma}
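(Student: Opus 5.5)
The plan is to prove uniform a priori bounds by testing \eqref{3.2} with $\psi=v_\epsilon$, pass to weak limits, and then test \eqref{3.2} with two families of test functions. The first family, depending only on $x_1$, yields the limit equation. The second, of order $\epsilon^2$ and quadratic in $x'$, identifies the limit of the rescaled transverse Hessian. The distortion factor $1-\sigma^2\mathcal{N}$ comes from this second step.

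\textbf{Step 1: a priori bounds.} Take $\psi=v_\epsilon$ in \eqref{3.2}. By \eqref{cha1}, the left-hand side dominates
$$c\int_{\Omega_1}\Big(|\partial^2_{11}v_\epsilon|^2+\epsilon^{-2}\sum_{i\ge2}|\partial^2_{1i}v_\epsilon|^2+\epsilon^{-4}\sum_{i,j\ge2}|\partial^2_{ij}v_\epsilon|^2\Big)dx+\|v_\epsilon\|^2_{L^2(\Gamma_1)}+\mu\epsilon^{-1}\|N_\epsilon\|^2_{L^2(\Gamma_1)},$$
up to harmless factors $(1+\epsilon^2\rho'^2)^{-1}$ and the density in \eqref{3.3}. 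Here $N_\epsilon:=\epsilon^{-1}\nabla' v_\epsilon\cdot\tilde\nu-\epsilon\rho'\partial_1 v_\epsilon$.

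The right-hand side is at most $\|\epsilon^{-1}g_\epsilon\|_{L^2(\Gamma_1)}\|v_\epsilon\|_{L^2(\Gamma_1)}$, which is controlled by \eqref{3.4}. All the displayed quantities are therefore bounded. By the norm equivalence recalled at the end of Section~\ref{sect. 2} (applied on the fixed domain $\Omega_1$, with the boundary $L^2$ term on $\Gamma_1$ supplemented by the Dirichlet condition on $L_1$), $v_\epsilon$ is bounded in $H^2(\Omega_1)$. Moreover:
\begin{itemize}
\item $W^\epsilon_{ij}:=\epsilon^{-2}\partial^2_{ij}v_\epsilon$ ($i,j\ge2$) is bounded in $L^2(\Omega_1)$;
\item $\epsilon^{-1}\partial^2_{1i}v_\epsilon$ is bounded in $L^2(\Omega_1)$;
\item $\|N_\epsilon\|_{L^2(\Gamma_1)}\le C\epsilon^{1/2}$.
\end{itemize}

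Pass to a subsequence with $v_\epsilon\rightharpoonup V$ in $H^2(\Omega_1)$ and $W^\epsilon\rightharpoonup W$ in $L^2$. Then $D'^2V=0$ and $\partial_1\nabla' V=0$, so $\nabla'V$ is constant. Since $V=\nabla V=0$ on $L_1$, this constant vanishes and $V=V(x_1)$, with $V(\pm l)=V'(\pm l)=0$.

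For the normal derivative, write the outer normal of $\Omega_1$ as $(-\rho'{\bf e}_1+\tilde\nu)/\sqrt{1+\rho'^2}$. Using $\nabla'v_\epsilon\cdot\tilde\nu=\epsilon N_\epsilon+\epsilon^2\rho'\partial_1 v_\epsilon=O(\epsilon^{3/2})$ in $L^2(\Gamma_1)$, together with the compactness of the trace $H^2(\Omega_1)\to H^1(\Gamma_1)$ (which gives $\partial_1 v_\epsilon\to V'$ strongly in $L^2(\Gamma_1)$), I obtain $(v_\epsilon)_\nu\to -\rho'V'/\sqrt{1+\rho'^2}=V_\nu$ strongly.

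\textbf{Step 2: the transverse Hessian.} Take $\psi=\epsilon^2h(x_1)\,x_ix_j/2$ with $h\in C^\infty_c(-l,l)$ and fixed $i,j\ge2$; this function lies in $H^2_{L_1}$. Terms carrying $\partial_1$ derivatives of $\psi$ pick up at least a factor $\epsilon$ relative to the bounded quantities, so they vanish. The $\mu$-term is $O(\epsilon^{-1}\cdot\epsilon^{1/2}\cdot\epsilon)\to0$. The right-hand side is $O(\epsilon^2)$. Passing to the limit gives
$$\int_{-l}^{l}h\int_{B_{\rho(x_1)}}\Big[(1-\sigma)W+\sigma(V''+{\rm tr}\,W)I\Big]dx'\,dx_1=0 .$$
Hence the cross-sectional mean $\overline W$ of $W$ satisfies the matrix identity $(1-\sigma)\overline W+\sigma(V''+{\rm tr}\,\overline W)I=0$. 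Taking the trace yields
$${\rm tr}\,\overline W=-\frac{\sigma(n-1)}{1-2\sigma+\sigma n}V''=-\sigma\mathcal{N}V''.$$

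\textbf{Step 3: the limit equation.} Take $\psi=\psi(x_1)\in C^\infty_c(-l,l)$. Only the $\partial_{11}$ terms, the $\sigma$-cross term with ${\rm tr}\,W^\epsilon$, the $\mu$-term and the boundary terms survive. The $\mu$-term is $\mu\epsilon^{-1}\int N_\epsilon(-\epsilon\rho'\psi')=O(\epsilon^{1/2})$. In the limit I get
$$\int\!\!\int_{B_\rho}\big[(1-\sigma)V''+\sigma(V''+{\rm tr}\,W)\big]\psi''\,dx'dx_1+\int_{\Gamma_1}V\psi\,d\mathcal H_1=\int_{\Gamma_1}g\,\psi\,d\mathcal H_1 ,$$
where the right-hand side uses \eqref{3.5}. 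Substituting Step~2, the first integrand integrates to $w_{n-1}\rho^{n-1}(1-\sigma^2\mathcal N)V''\psi''$. The boundary integrals give $(n-1)w_{n-1}\rho^{n-2}(V-\mathcal M(g))\psi$ (for $n=2$ the counting-measure version). Dividing by $w_{n-1}$ gives the weak form of \eqref{3.6}, with the boundary conditions already obtained in Step~1.

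\textbf{Conclusion and main obstacle.} Since $1-\sigma^2\mathcal N>0$ for $-1/(n-1)<\sigma<1$, problem \eqref{3.6} is coercive on $H^2_0(-l,l)$ and has a unique solution. Therefore the whole family $v_\epsilon$ converges, not just a subsequence.

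I expect the main obstacle to be Step~2, identifying ${\rm tr}\,W$. The key point is to choose test functions that are quadratic in $x'$ and scaled by $\epsilon^2$, so that only cross-sectional averages are needed and no pointwise structure of $W$ has to be proved. A secondary difficulty is the careful bookkeeping of the $\mu$ boundary term in both steps.
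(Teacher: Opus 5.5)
Your proposal is correct and follows essentially the same route as the paper. You test with $v_\epsilon$ for uniform bounds and take weak limits of $\epsilon^{-1}\partial^2_{1i}v_\epsilon$ and $\epsilon^{-2}\partial^2_{ij}v_\epsilon$. You then use an $\epsilon^2$-scaled test function quadratic in $x'$ to get $\int_{B_\rho}{\rm tr}\,W\,dx'=-w_{n-1}\rho^{n-1}\sigma\mathcal N V''$; the paper's $\theta(x_1)|x'|^2$ is the trace of your $x_ix_j$ family. Finally, $x_1$-only test functions give the limit equation.

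The deviations are minor. You remove the linear transverse part of $V$ using the Dirichlet condition on $L_1$, whereas the paper uses the $O(\epsilon^{1/2})$ bound on the $\mu$-term. You also add the uniqueness argument, via $1-\sigma^2\mathcal N=(1-\sigma)(1+(n-1)\sigma)/(1+(n-2)\sigma)>0$, which upgrades the paper's subsequential convergence to convergence of the whole family.
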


\begin{proof}
First of all, by setting $\psi=v_{\epsilon}$, from \eqref{3.2} we obtain
\begin{equation}\label{3.7}
\begin{aligned}
(1-\sigma)&\int_{\Omega_1}\left|\frac{\partial^2 v_{\epsilon}}{\partial x^2_1}\right|^2
+\frac{2}{\epsilon^2}\sum_{i=2}^{n}\left|\frac{\partial^2 v_{\epsilon}}{\partial x_1 \partial x_i}\right|^2
+\frac{1}{\epsilon^4}\sum_{i,j=2}^{n}\left|\frac{\partial^2 v_{\epsilon}}{\partial x_i \partial x_j}\right|^2dx    
\\&+
\sigma\int_{\Omega_1}\left|\frac{\partial^2 v_{\epsilon}}{\partial x^2_1}
+\frac{1}{\epsilon^2}\sum_{i=2}^{n}\frac{\partial^2 v_{\epsilon}}{\partial x^2_i}\right|^2dx
\\&+
\frac{\mu}{\epsilon}\int_{\Gamma_1}
\left|-\epsilon\rho'\frac{\partial v_{\epsilon}}{\partial x_1}
+\frac{1}{\epsilon}\sum_{i=2}^{n}\frac{\partial v_{\epsilon}}{\partial x_i} \tilde{\nu}_i\right|^2\frac{d\mathcal{H}_1}{1+\epsilon^2\rho'^2}
\\&+
\int_{\Gamma_1}
v^2_{\epsilon} d\mathcal{H}_1=\frac{1}{\epsilon}
\int_{\Gamma_1}
g_{\epsilon}v_{\epsilon}
d\mathcal{H}_1.
\end{aligned}
\end{equation}
Now we estimate the term on the right-hand side of \eqref{3.7}. Namely,
\begin{equation}\label{3.8}
\begin{aligned}
\frac{1}{\epsilon}&
\int_{\Gamma_1}
g_{\epsilon}v_{\epsilon}
d\mathcal{H}_1 
\leq
\frac{1}{2\epsilon^2}
\int_{\Gamma_1}
g^2_{\epsilon}
d\mathcal{H}_1
+
\frac{1}{2}
\int_{\Gamma_1} v^2_{\epsilon}
d\mathcal{H}_1
\\&\qquad\qquad\qquad\qquad \leq
\frac{C}{\epsilon^2}\int_{\Gamma_1}
g^2_{\epsilon}
d\mathcal{H}^{n-1}
+
\frac{1}{2}
\int_{\Gamma_1} v^2_{\epsilon}
d\mathcal{H}_1.
\end{aligned}
\end{equation}
Since $\left(Q_{\sigma,\Omega_1}\left(u,u\right)+\|u\|^2_{L^2\left(\Gamma_1\right)}\right)^{\frac{1}{2}}$ is equivalent to the standard norm of $H^2\left(\Omega_1\right)$ for the functions from $H_{L_1}^2\left(\Omega_1\right)$, combining \eqref{3.4}, \eqref{3.7} and \eqref{3.8}, we deduce that
$\|v_{\epsilon}\|_{H^2(\Omega_1)}\leq C$ for all $\epsilon> 0$, see also \eqref{cha1}; hence, by the compactness of the trace operator, there exists $V\in H^2_{L_1}(\Omega_1)$ such that,
up to a subsequence $v_{\epsilon}\rightharpoonup V$ weakly in  $H^2(\Omega_1)$ and $\nabla v_{\epsilon}\rightarrow \nabla V$ strongly in  $L^2(\Gamma_1)$. We now prove that $V$ is as in the statement.

Note that \eqref{3.4}, \eqref{3.7} and \eqref{3.8} imply that there exist $\tilde{v}_i$ and $\overline{v}_{i,j}$ in $L^2(\Omega_1)$ such that
\begin{equation}\label{3.9}
\frac{1}{\epsilon}\frac{\partial^2 v_{\epsilon}}{\partial x_1 \partial x_i}\rightharpoonup \tilde{v}_i \ \text{in} \ L^2(\Omega_1), \ \text{for all} \ i=2,..,n
\end{equation}
and
\begin{equation}\label{3.10}
\frac{1}{\epsilon^2}\frac{\partial^2 v_{\epsilon}}{\partial x_i \partial x_j}\rightharpoonup \overline{v}_{i,j}\ \text{in} \ L^2(\Omega_1), \ \text{for all} \ i,j=2,..,n.    
\end{equation}
In particular, by setting $v=\sum_{i=2}^n\overline{v}_{i,i}$ we get 
\begin{equation}\label{3.11}
\frac{1}{\epsilon^2}\sum_{i=2}^{n}\frac{\partial^2 v_{\epsilon}}{\partial x^2_i}\rightharpoonup v \ \text{in} \ L^2(\Omega_1).
\end{equation}
The convergences \eqref{3.9}-\eqref{3.11} imply that the limiting function $V$ takes the form 
\begin{equation}\label{3.12}
V(x_1,x')=a_1(x_1)+a_2x_2+\cdot\cdot\cdot +a_n x_n,   
\end{equation}
where $a_1\in H^2\left(-l, l\right)$ and $a_2,...,a_n\in \mathbb{R}$.

We denote by $L^2_{\epsilon, \rho}(\Gamma_1)$ the weighted $L^2$-space 
\[
L^2\biggl(\Gamma_1; \frac{d\mathcal{H}^{n-1} }{\sqrt{1+\epsilon^2\rho'^2}\sqrt{1+\rho'^2}}\biggr),
\]
and note that
\[
\frac{d\mathcal{H}^{n-1}}{\sqrt{1+\epsilon^2\rho'^2}\sqrt{1+\rho'^2}}=\frac{d\mathcal{H}_1}{1+\epsilon^2\rho'^2}.
\]
Then, from  \eqref{3.7} we get 
\[
\frac{\mu}{\epsilon}\int_{\Gamma_1} \left|-\epsilon\rho'\frac{\partial v_{\epsilon}}{\partial x_1} 
+\frac{1}{\epsilon}\sum_{i=2}^{n}\frac{\partial v_{\epsilon}}{\partial x_i} \tilde{\nu}_i\right|^2\frac{d\mathcal{H}_1}{1+\epsilon^2\rho'^2}\leq C,
\]
hence 
\[
\left\|\frac{1}{\epsilon}\sum_{i=2}^{n}\frac{\partial v_{\epsilon}}{\partial x_i}\tilde{\nu}_i-\epsilon\rho'\frac{\partial v_{\epsilon}}{\partial x_1}\right\|_{L^2_{\epsilon, \rho}  (\Gamma_1)}\leq C\epsilon^\frac{1}{2}.
\]
This implies
\begin{equation}\label{3.13}
\begin{aligned}
\left\|\frac{1}{\epsilon}\sum_{i=2}^{n}\frac{\partial v_{\epsilon}}{\partial x_i}\tilde{\nu}_i\right\|_{L^2_{\epsilon,\rho }(\Gamma_1)}
&
\leq C\epsilon^\frac{1}{2}
+\left\|\epsilon\rho'\frac{\partial v_{\epsilon}}{\partial x_1}\right\|_{L^2_{\epsilon ,\rho }(\Gamma_1)}\leq C\epsilon^\frac{1}{2}
+C\epsilon\left\|\text{Tr}\left(\frac{\partial v_{\epsilon}}{\partial x_1}\right)\right\|_{L^2(\Gamma_1)}
\\&
\leq C\epsilon^\frac{1}{2}+C\epsilon\| v_{\epsilon}\|_{H^2(\Omega_1)}\rightarrow0,
\end{aligned}
\end{equation}
as $\epsilon\rightarrow0$, where we have used the boundedness of $\|v_{\epsilon}\|_{H^2(\Omega_1)}$, the boundedness of $\rho'$ and $(\sqrt{1+\epsilon^2\rho'^2})^{-1}$, and the classical Trace Theorem.

Hence, \eqref{3.13}, combined with $\nabla  v_{\epsilon}\rightarrow\nabla V$ strongly in  $L^2(\Gamma_1)$, implies that
\[
\left\|\sum_{i=2}^{n}\frac{\partial V}{\partial x_i}\tilde{\nu}_i\right\|_{L^2(\Gamma_1)}=0.
\]
The last equality combined with \eqref{3.12} gives us $a_2\tilde{\nu}_2+\cdot\cdot\cdot +a_n\tilde{\nu}_n=0$ on $\Gamma_1$. Since $\tilde{\nu}$ is the unit outer normal  to the $(n-1)$-dimensional ball $B^{n-1}_{\rho(x_1)}(x_1)$ and the last equality holds for all values of $\tilde{\nu}$ we get $a_2\equiv\cdot\cdot\cdot \equiv a_n\equiv0$ and $V(x_1,x')=a_1(x_1)$ for all $(x_1,x')\in \Omega_1$. This implies that $V$ is independent on the variables $x'$.

Next we identify the function $v$ in \eqref{3.11}. 
To do so, we select a specific test function $\psi$ in \eqref{3.2}. Let us consider
\[
\psi(x_1,x')=\theta(x_1)\sum_{i=2}^{n}x^2_i,
\]
where $\theta\in C_c^{\infty}\left(-l,l\right)$. Substituting this into \eqref{3.2} and multiplying \eqref{3.2} by $\epsilon^2$, taking the limit as $\epsilon\rightarrow0$, using \eqref{3.9}-\eqref{3.11} and \eqref{3.13} we obtain
\[
(1-\sigma)\int_{\Omega_1}v\theta dx+(n-1)\sigma\int_{\Omega_1}\left(\frac{d^2 V}{dx^2_1}+v\right)\theta dx=0.
\]
Thus, 
\[
\int_{\Omega_1}\left(v+\sigma\mathcal{N}\frac{d^2 V}{dx^2_1}\right)\theta dx=0.
\]
Since the last equality holds for all $\theta\in C_c^{\infty}\left(-l,l\right)$, by the Fubini-Tonelli Theorem we deduce
\[
\int_{B_1}\left(v+\sigma\mathcal{N}\frac{d^2 V}{dx^2_1}\right) dx'=0,
\]
or equivalently
\begin{equation}\label{3.14}
\int_{B_1}v dx'=-w_{n-1}\rho^{n-1}\sigma\mathcal{N}\frac{d^2 V}{dx^2_1}.
\end{equation}

Finally, returning  to problem \eqref{3.2}, choosing a test function $\psi\in  H_0^2\left(-l, l\right)$ (depending only on the variable $x_1$), using the Fubini-Tonelli Theorem and \eqref{3.9}, \eqref{3.11}, \eqref{3.13} and \eqref{3.14}, taking the limit as $\epsilon\rightarrow0$, we obtain
\begin{equation}\label{3.15}
\begin{aligned}
\left(1-\sigma^2\mathcal{N}\right)\int_{-l}^{l}\rho^{n-1} \frac{d^2 V}{dx^2_1}\frac{d^2 \psi}{dx^2_1}dx_1
+(n-1)\int_{-l}^{l}\rho^{n-2} V \psi dx_1
=(n-1)\int_{-l}^{l}\rho^{n-2}\mathcal{M}(g)\psi dx_1.
\end{aligned}
\end{equation}
Since equality \eqref{3.15} holds for all $\psi\in  H_0^2\left(-l, l\right)$, it  represents the weak formulation of the boundary value problem \eqref{3.6}.

\end{proof}

\subsection{Spectral convergence results}
\noindent
In this subsection, we prove the spectral convergence of the eigenvalues and eigenfunctions of problem \eqref{1.2} to the corresponding eigenvalues and eigenfunctions of the one-dimensional problem \eqref{1.4}. 

First, we introduce the Hilbert spaces
\[
\mathcal{H}_\epsilon=L^2(\Gamma_\epsilon; \epsilon^{-2}d\mathcal{H}^{n-1}), \ \text{and} \ \mathcal{H}_0=L^2_{n,\rho}\left(-l, l\right).
\]
Recall that $L^2_{n,\rho}\left(-l, l\right)=L^2\left(\left(-l, l\right); (n-1)w_{n-1}\rho^{n-2}dx_1\right)$.
We define the operator  
\[
\mathcal{E}_\epsilon: \  \mathcal{H}_0\rightarrow  L^2(\Gamma_\epsilon; \epsilon^{-2}d\mathcal{H}^{n-1}),
\]
by setting
\[
\mathcal{E}_\epsilon u=\epsilon^{2-\frac{n}{2}} u, \ \text{for all} \ u\in \mathcal{H}_0.
\]
It is easy to see that $\mathcal{E}_\epsilon$ satisfies condition \eqref{2.1}. 

Now we define the operators $B_\epsilon$  and $B_0$ as required in Sect. \ref{sect. 2}. The operator $B_\epsilon$  will be the resolvent operator associated with problem \eqref{2.5} rescaled by $\epsilon$. Namely, we consider the operator 
\[
B_\epsilon: \ L^2(\Gamma_\epsilon;\epsilon^{-2}d\mathcal{H}^{n-1})\rightarrow  L^2(\Gamma_\epsilon;\epsilon^{-2}d\mathcal{H}^{n-1} ),
\]
defined by
\[
B_\epsilon f_{\epsilon}=\epsilon u_{\epsilon}, \ \text{for all} \ f_{\epsilon}\in L^2(\Gamma_\epsilon;\epsilon^{-2}d\mathcal{H}^{n-1}),
\]
where $u_{\epsilon}$ is the solution to problem \eqref{2.5}. Note that a real number $\lambda(B_\epsilon)\neq0$ is an
eigenvalue of $B_\epsilon$ if and only if $\lambda_\epsilon=\frac{\epsilon(1-\lambda(B_\epsilon))}{\lambda(B_\epsilon)}
$ is an eigenvalue of problem \eqref{1.2} with the same eigenfunction. Equivalently, $\lambda(B_\epsilon) \neq 0$ is an eigenvalue of $B_\epsilon$ if and only if $\overline{\lambda}_\epsilon = \frac{\epsilon}{\lambda(B_\epsilon)}$ is an eigenvalue of problem \eqref{2.3}. Since the trace operator is compact, $B_\epsilon$ is compact.

The operator $B_0$ is the resolvent operator associated with problem \eqref{2.6}. Namely, 
\[
B_0: \ \mathcal{H}_0\rightarrow  \mathcal{H}_0,
\]
such that 
\[
B_{0}f=V, \ \ \text{for all} \ f\in \mathcal{H}_0,
\]
where $V$ is a solution to problem \eqref{2.6}. We note that a real number $\lambda(B_0)\neq0$ is an
eigenvalue of $B_0$ if and only if $\lambda=\frac{1-\lambda(B_0)}{\lambda(B_0)}
$ is an eigenvalue of problem \eqref{1.4} with the same eigenfunction. The operator $B_{0}$
is compact since 
\[
B_{0}\left(\mathcal{H}_0\right)\subset H^2\left(-l, l\right)
\]
and the embedding
\[
H^2\left(-l, l\right)
\hookrightarrow
\mathcal{H}_0
\]
is compact.

\begin{lemma}\label{lemma 3.2}
The following compact convergence holds:
\[
B_\epsilon\xrightarrow{C} B_{0} \ \text{as}  \ \epsilon\rightarrow0,
\]
in the sense of Definition \ref{def 2.4}.
\end{lemma}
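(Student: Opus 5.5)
We have to verify the two requirements of Definition~\ref{def 2.4}: first $B_\epsilon\xrightarrow{EE}B_0$, and then precompactness of $\{B_\epsilon f_\epsilon\}$ whenever $\|f_\epsilon\|_{\mathcal{H}_\epsilon}=1$. Both reduce to Lemma~\ref{l 3.1} once the norms are rewritten on $\Gamma_1$. For $f_\epsilon\in\mathcal{H}_\epsilon$ put $g_\epsilon(x_1,x')=\epsilon^{\frac{n-2}{2}}f_\epsilon(x_1,\epsilon x')$ and $v_\epsilon(x_1,x')=\epsilon^{\frac{n-2}{2}}u_\epsilon(x_1,\epsilon x')$, where $u_\epsilon$ solves \eqref{2.5}. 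Using $d\mathcal{H}^{n-1}=\epsilon^{n-2}\, d\mathcal{H}_1$ (with $d\mathcal{H}_1$ as in \eqref{3.3}) we get
\[
\|f_\epsilon\|^2_{\mathcal{H}_\epsilon}=\epsilon^{-2}\int_{\Gamma_1}g_\epsilon^2\, d\mathcal{H}_1,\qquad \|B_\epsilon f_\epsilon\|^2_{\mathcal{H}_\epsilon}=\int_{\Gamma_1}v_\epsilon^2\, d\mathcal{H}_1 .
\]
Similarly, for $f\in\mathcal{H}_0$ one has $\|h_\epsilon-\mathcal{E}_\epsilon f\|_{\mathcal{H}_\epsilon}=\|\epsilon^{-1}\tilde h_\epsilon-f\|_{L^2(\Gamma_1;d\mathcal{H}_1)}$, where $\tilde h_\epsilon$ is the rescaled function. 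Consequently, $f_\epsilon\xrightarrow{E}f$ means exactly that $\epsilon^{-1}g_\epsilon\to f$ strongly in $L^2(\Gamma_1)$. Since the weight $\sqrt{1+\epsilon^2\rho'^2}$ tends to $1$ uniformly, the $\epsilon$-dependence of the measure is harmless. Likewise, $B_\epsilon f_\epsilon\xrightarrow{E}V$ means $v_\epsilon\to V$ strongly in $L^2(\Gamma_1)$.

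\emph{EE-convergence.} Let $f_\epsilon\xrightarrow{E}f$. Then \eqref{3.4} and \eqref{3.5} hold with $g=f$, which depends only on $x_1$, so that $\mathcal{M}(g)=f$. Lemma~\ref{l 3.1} gives, along subsequences, $v_\epsilon\rightharpoonup V$ in $H^2(\Omega_1)$, where $V$ solves \eqref{2.6}. That is, $V=B_0f$. By compactness of the trace $H^2(\Omega_1)\to L^2(\Gamma_1)$, we get $v_\epsilon\to V$ strongly in $L^2(\Gamma_1)$. Because the limit problem \eqref{2.6} has a unique solution (its form is coercive on $H^2_0(-l,l)$ since $1-\sigma^2\mathcal{N}>0$, which we should check from the range of $\sigma$), the whole family converges. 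Hence $B_\epsilon f_\epsilon\xrightarrow{E}B_0f$.

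\emph{Precompactness.} Now assume only $\|f_\epsilon\|_{\mathcal{H}_\epsilon}=1$. Then $\|\epsilon^{-1}g_\epsilon\|_{L^2(\Gamma_1)}\le C$, so along any sequence $\epsilon_m\to0$ we can extract $\epsilon^{-1}g_\epsilon\rightharpoonup g$ weakly in $L^2(\Gamma_1)$, where $g$ may now depend on $x'$. Lemma~\ref{l 3.1} still applies and gives $v_\epsilon\rightharpoonup V$ in $H^2(\Omega_1)$, with $V$ the solution of \eqref{3.6} for datum $\mathcal{M}(g)$; here $\mathcal{M}(g)\in\mathcal{H}_0$ by Cauchy--Schwarz. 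The uniform $H^2$ bound from \eqref{3.7}--\eqref{3.8} together with compactness of the trace operator gives $v_\epsilon\to V$ strongly in $L^2(\Gamma_1)$. Since $V$ is independent of $x'$, this is precisely $B_\epsilon f_\epsilon\xrightarrow{E}V$ with $V\in\mathcal{H}_0$.

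The main obstacle I expect is the bookkeeping that identifies $E$-convergence with strong $L^2(\Gamma_1)$ convergence, and in particular reconciling normalizations. On the $n-2$-sphere factor, $\int_{\partial B_1}d\mathcal{H}^{n-2}=(n-1)w_{n-1}$, so functions of $x_1$ alone have $L^2(\Gamma_1;d\mathcal{H}_1)$ norm asymptotically equal to their $L^2_{n,\rho}$ norm. This must be matched against the factors $\epsilon^{2-\frac n2}$ in $\mathcal{E}_\epsilon$ and $\epsilon$ in $B_\epsilon$. A related point needs care: the lemma only yields the trace convergence of $v_\epsilon$ in a norm weighted by $\sqrt{1+\epsilon^2\rho'^2}$. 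If $\rho'$ is unbounded near $\pm l$, I would use the fact, already invoked in \eqref{3.13}, that $\rho'$ is bounded.
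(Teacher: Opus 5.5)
Your proposal is correct and follows essentially the same route as the paper: rescale to $\Gamma_1$, apply Lemma~\ref{l 3.1} in both the precompactness and the $EE$-convergence parts, and use compactness of the trace to turn weak $H^2(\Omega_1)$ convergence of $v_\epsilon$ into $\|\epsilon u_\epsilon-\mathcal{E}_\epsilon V\|_{\mathcal{H}_\epsilon}=\|v_\epsilon-V\|_{L^2(\Gamma_1;d\mathcal{H}_1)}\to0$ (the paper gets this by expanding the square term by term in (3.17)). Your explicit uniqueness argument, which upgrades subsequential convergence to convergence of the whole family in the $EE$ part and which the paper leaves implicit, is valid, since $1-\sigma^2\mathcal{N}=(1-\sigma)(1+(n-1)\sigma)/(1+(n-2)\sigma)>0$ for $-1/(n-1)<\sigma<1$.
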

\begin{proof}
Let $f_{\epsilon}\in L^2(\Gamma_\epsilon;\epsilon^{-2}d\mathcal{H}^{n-1} )$ be such that
\begin{equation}\label{3.16}
\|f_{\epsilon}\|_{L^2(\Gamma_\epsilon;\epsilon^{-2}d\mathcal{H}^{n-1})}=1.    
\end{equation}
Let $g_{\epsilon}\in L^2(\Gamma_1)$ be defined by $g_{\epsilon}(x_1,x')=\epsilon^{\frac{n-2}{2}}f_{\epsilon}(x_1,\epsilon x')$ for all $(x_1,x')\in \Gamma_1$.  Then by condition \eqref{3.16} we have
\[
\begin{aligned}
\frac{1}{\epsilon^2}&\int_{\Gamma_1 }g^2_\epsilon d\mathcal{H}^{n-1}
\leq
\frac{C}{\epsilon^2}\int_{\Gamma_1} g^2_\epsilon d\mathcal{H}_1
=C\|f_{\epsilon}\|^2_{L^2(\Gamma_\epsilon;\epsilon^{-2}d\mathcal{H}^{n-1})}=C.
\end{aligned}
\]
Hence, assumption \eqref{3.4} holds and, possibly passing to a subsequence, also condition \eqref{3.5} holds for some $g\in L^2(\Gamma_1)$. Then by Lemma \ref{l 3.1}, we conclude that there exists $V \in H^2(\Omega_1)$, independent of the variables $x'$, such that, up to a subsequence $v_{\epsilon} \rightharpoonup V$ in $H^2(\Omega_1)$ and $V$ solves problem \eqref{3.6}. Recall that $B_\epsilon f_{\epsilon} = \epsilon u_{\epsilon}$, where $u_{\epsilon}$ is a solution to problem \eqref{2.5}. Let $v_{\epsilon}(x_1, x') = \epsilon^{\frac{n-2}{2}}u_{\epsilon}(x_1, \epsilon x')$ for all $(x_1, x') \in \Omega_1$. By recalling \eqref{3.3}, we see  that
\[
\|B_\epsilon f_{\epsilon}\|^2_{L^2(\Gamma_\epsilon;\epsilon^{-2}d\mathcal{H}^{n-1} ) )}=\|\epsilon u_{\epsilon}\|^2_{L^2(\Gamma_\epsilon;\epsilon^{-2}d\mathcal{H}^{n-1} ))}
=\int_{\Gamma_1 }v_{\epsilon}^2 d\mathcal{H}_1,    
\]
\[
<\epsilon u_{\epsilon},\mathcal{E}_\epsilon V>_{L^2(\Gamma_\epsilon;\epsilon^{-2}d\mathcal{H}^{n-1} ) }
=\int_{\Gamma_1 }v_{\epsilon} Vd\mathcal{H}_1,
\]
and using the definition of the operator $\mathcal{E}_\epsilon$ and the compactness of trace map, we obtain
\begin{equation}\label{3.17}
\begin{aligned}
\|\epsilon u_{\epsilon}&-\mathcal{E}_\epsilon V\|^2_{L^2(\Gamma_\epsilon;\epsilon^{-2}d\mathcal{H}^{n-1} )}=
\|\epsilon u_{\epsilon}\|^2_{L^2(\Gamma_\epsilon;\epsilon^{-2}d\mathcal{H}^{n-1} )}
\\&
-2<\epsilon u_{\epsilon},\mathcal{E}_\epsilon V>_{L^2(\Gamma_\epsilon;\epsilon^{-2}d\mathcal{H}^{n-1} )}
+\|\mathcal{E}_\epsilon V\|^2_{L^2(\Gamma_\epsilon;\epsilon^{-2}d\mathcal{H}^{n-1} )}\rightarrow 0,
\end{aligned}
\end{equation}
as $\epsilon \rightarrow 0$.

Let $f_{\epsilon}\in L^2(\Gamma_\epsilon;\epsilon^{-2}d\mathcal{H}^{n-1}   )$ and $f\in L^2_{n,\rho}(-l,l)$ be such that
\begin{equation}\label{3.18}
\|f_{\epsilon}-\mathcal{E}_\epsilon f\|_{L^2(\Gamma_\epsilon;\epsilon^{-2}d\mathcal{H}^{n-1} )}\rightarrow 0 \ \text{as} \ \epsilon\rightarrow 0.
\end{equation}
Let $g\in L^2(\Gamma_1)$ be defined by $g=\mathcal{E}_1 f$, and let $g_{\epsilon}\in L^2(\Gamma_1)$ be defined by $g_{\epsilon}(x_1,x')=\epsilon^{\frac{n-2}{2}}f_{\epsilon}(x_1,\epsilon x')$ for all $(x_1,x')\in \Gamma_1$.
Then, using
\begin{multline*}
g((x_1, \rho (x_1) x'(\varphi_1,\dots \varphi_{n-3}, \theta)) )=f(x_1) \\
=\epsilon^{\frac{n-2}{2}}\epsilon^{-1}(\mathcal{E}_\epsilon f)((x_1,\epsilon \rho (x_1) x'(\varphi_1,\dots \varphi_{n-3}, \theta)) ),    
\end{multline*}
and \eqref{3.18} we obtain
\[
\begin{aligned}
\|&\epsilon^{-1}g_{\epsilon}-g\|^2_{L^2\left(\Gamma_1\right)}
 \leq C
\int_{\Gamma_1}\left(\epsilon^{-1}g_\epsilon- g\right)^2
d\mathcal{H}_1
=\frac{C}{\epsilon^2}\int_{\Gamma_{\epsilon} } \left(f_\epsilon-\mathcal{E}_\epsilon f\right)^2 d\mathcal{H}^{n-1}
\rightarrow 0 
\end{aligned}
\]
$\text{as} \ \epsilon\rightarrow 0$.
Then, assumptions \eqref{3.4} and \eqref{3.5} hold and Lemma ~\ref{l 3.1} can be applied. Therefore, we conclude that there exists $V \in H^2(\Omega_1)$ such that, up to a subsequence, $v_{\epsilon} \rightharpoonup V$ in $H^2(\Omega_1)$ and $V$ solves problem \eqref{3.6}. Moreover, by the same computations as in \eqref{3.17}, we conclude that
\[
\|\epsilon u_{\epsilon}-\mathcal{E}_\epsilon V\|_{L^2(\Gamma_\epsilon;\epsilon^{-2}d\mathcal{H}^{n-1})}\rightarrow 0,
\]
as $\epsilon \rightarrow 0$,
thereby completing the proof of Lemma~ \ref{lemma 3.2}.
\end{proof}

\textbf{Proof of Theorem ~\ref{thm 1.1}}. By Lemma \ref{lemma 3.2} and Theorem \ref{th 2.5} it follows that there is spectral convergence of $B_\epsilon$ to $B_0$ as $\epsilon\rightarrow0$.
In particular, for every $k\in \mathbb{N}, \ \lambda_k(B_\epsilon)\rightarrow \lambda_k(B_0)$, as $\epsilon\rightarrow0$. Hence,
\begin{equation}\label{3.19}
\frac{\lambda_{\epsilon,k}}{\epsilon}=\frac{1-\lambda_k(B_\epsilon)}{\lambda_k(B_\epsilon)}\sim\frac{1-\lambda_k(B_0)}{\lambda_k(B_0)}=\lambda_k
\end{equation}
as $\epsilon\rightarrow0$, where $\lambda_{\epsilon,k}$ is the $k$-th eigenvalue of problem \eqref{1.2} and $\lambda_k$ is the $k$-th eigenvalue of problem \eqref{1.4}. The remaining task is to prove the second part of Theorem~\ref{thm 1.1}. 

We begin by observing that if the eigenfunctions $u_{\epsilon ,k}$ are normalized in $L^2(\Gamma_{\epsilon})$ then 
$\tilde u_{\epsilon ,k}:=\epsilon u_{\epsilon ,k}$  are normalized in $L^2(\Gamma_\epsilon;\epsilon^{-2}d\mathcal{H}^{n-1} )$.
Thus,  by Theorem \ref{th 2.5} there exists an orthonormal basis  of eigenfunctions $v_k$, $k\in \mathbb{N}$ of \eqref{1.4} in 
$L^2_{n,\rho}(-l,l)$ such that, possibly passing to a subsequence, 
\begin{equation}\label{vainnikko1}
\|\tilde u_{\epsilon ,k}-\mathcal{E}_\epsilon v_k\|_{L^2(\Gamma_\epsilon;\epsilon^{-2}d\mathcal{H}^{n-1} )}\rightarrow 0 \ \text{as} \ \epsilon\rightarrow 0.
\end{equation} 
In order to prove the convergence in $\Omega_1$, we argue as before. 
For all $(x_1,x') \in \Omega_1$, we set $v_{\epsilon,k}(x_1, x') =\epsilon^{\frac{n-2}{2}} u_{\epsilon,k}(x_1, \epsilon x')$ and $\psi(x_1, x') =\epsilon^{\frac{n-2}{2}} \varphi(x_1, \epsilon x')$ in \eqref{1.3}. Then, for the function $v_{\epsilon,k}$ we obtain the problem
\begin{equation}\label{3.20}
\begin{aligned}
(1&-\sigma)\int_{\Omega_1}\left(\frac{\partial^2 v_{\epsilon,k}}{\partial x^2_1}\frac{\partial^2\psi}{\partial x^2_1}
+\frac{2}{\epsilon^2}\sum_{i=2}^{n}\frac{\partial^2 v_{\epsilon,k}}{\partial x_1 \partial x_i}\frac{\partial^2 \psi}{\partial x_1 \partial x_i}
+\frac{1}{\epsilon^4}\sum_{i,j=2}^{n}\frac{\partial^2 v_{\epsilon,k}}{\partial x_i \partial x_j}\frac{\partial^2\psi}{\partial x_i \partial x_j}\right)dx    
\\&+
\sigma\int_{\Omega_1}\left(\frac{\partial^2 v_{\epsilon,k}}{\partial x^2_1}
+\frac{1}{\epsilon^2}\sum_{i=2}^{n}\frac{\partial^2 v_{\epsilon,k}}{\partial x^2_i}\right)
\left(\frac{\partial^2\psi}{\partial x^2_1}
+\frac{1}{\epsilon^2}\sum_{i=2}^{n}\frac{\partial^2\psi}{\partial x^2_i}\right)dx
\\&+
\frac{\mu}{\epsilon}\int_{\Gamma_1 }
\left(-\epsilon\rho'\frac{\partial v_{\epsilon,k}}{\partial x_1}
+\frac{1}{\epsilon}\sum_{i=2}^{n}\frac{\partial v_{\epsilon,k}}{\partial x_i}\tilde{\nu}_i\right)
\left(-\epsilon\rho'\frac{\partial \psi}{\partial x_1}
+\frac{1}{\epsilon}\sum_{i=2}^{n}\frac{\partial \psi}{\partial x_i}\tilde{\nu}_i\right)\frac{d\mathcal{H}_1}{1+\epsilon^2\rho'^2}
\\=&
\frac{\lambda_{\epsilon,k}}{\epsilon}\int_{\Gamma_1}
v_{\epsilon,k} \psi
d\mathcal{H}_1,
\end{aligned}
\end{equation} 
see also \eqref{3.2}.
If we set in \eqref{3.20} $\psi(x_1, x')=v_{\epsilon,k}(x_1, x')$ for all $(x_1,x') \in \Omega_1$, then taking into account \eqref{3.19} and the normalization $\|u_{\epsilon,k}\|_{L^2(\Gamma_\epsilon)}=1$, we easily obtain that
$\|v_{\epsilon ,k}\|_{H^2(\Omega_1)}\leq C$ for all $\epsilon> 0$; hence, up to a subsequence, $v_{\epsilon ,k} \rightharpoonup V_k$ weakly in $H^2(\Omega_1)$ and $v_{\epsilon ,k} \rightarrow V_k$,  $(v_{\epsilon ,k})_\nu \rightarrow (V_k)_\nu$ strongly in $L^2(\Gamma_1)$. As in Lemma~ \ref{l 3.1} the function $V_k$ is independent of the variables $x'$.

In equation \eqref{3.20}, by selecting a test function $\psi \in H_0^2(-l, l)$ and considering \eqref{3.19}, following the same procedure as in Lemma \ref{l 3.1} and  taking the limit as $\epsilon \rightarrow 0$, we obtain  
\begin{equation}\label{3.21}
\begin{aligned}
\left(1-\sigma^2\mathcal{N}\right)&\int_{-l}^{l}\rho^{n-1} \frac{d^2 V_k}{dx^2_1} \frac{d^2 \psi}{dx^2_1} dx_1
=\lambda_k (n-1)\int_{-l}^{l}\rho^{n-2} V_k \psi dx_1.
\end{aligned}    
\end{equation}
Since equality \eqref{3.21} holds for all $\psi\in  H_0^2\left(-l, l\right)$, then it represents the weak formulation of problem \eqref{1.4} and $V_k$ is an eigenfunction of \eqref{1.4}.

 By re-writing \eqref{vainnikko1} in terms of $u_{\epsilon , k}$, one can easily deduce that, possibly passing to a subsequence,
$v_{\epsilon ,k}\to v_{k}$ in $L^2(\Gamma_1)$ as $\epsilon \to 0$, hence $V_k=v_k$ and the proof is complete. 

\section*{Acknowledgments}
This research has been funded by the Science Committee of the Ministry of Science and Higher Education of the Republic of Kazakhstan (Grant No. AP26194963).

The second named author is a member of the Gruppo Nazionale per l'Analisi  Matematica, la Probabilit\`{a} e le loro Applicazioni (GNAMPA) of the Istituto Nazionale di Alta Matematica (INdAM) and he acknowledges support from the project ``Perturbation problems and asymptotics for elliptic differential equations: variational and potential theoretic methods" funded by the European Union - Next Generation EU and by MUR Progetti di Ricerca di Rilevante Interesse Nazionale (PRIN) Bando 2022 grant 2022SENJZ3.


\begin{thebibliography}{99}

\bibitem{JMA}
Arrieta, J.M., Carvalho, A.N., Losada-Cruz, G.: Dynamics in dumbell domains I. Continuity of the set of equilibria. J. Differ. Equ. \textbf{231}, 551--597  (2006)

\bibitem{PDL}
Arrieta, J.M., Ferraresso, F., Lamberti, P.D.: Spectral analysis of the biharmonic
operator subject to Neumann boundary
conditions on dumbbell domains. Integr. Equ. Oper. Theory \textbf{89}, 377--408 (2017)

\bibitem{arrlam}
Arrieta, J.M., Lamberti, P.D,
Higher order elliptic operators on variable domains. Stability results and boundary oscillations for intermediate
problems,
J. Differential Equations,
\textbf{263}, 4222--4266
(2017)


\bibitem{EA}
Arrieta, J.M., López-Fernández, M., Zuazua, E.: Approximating travelling waves by equilibria
of non-local equations. Asymptot. Anal. \textbf{78}(3), 145--186 (2012)

\bibitem{JM}
Arrieta, J.M., Nakasato, J.M., Pereira, M.C.: The $p$-Laplacian equation in thin domains: the unfolding approach. J. Differ. Equ. \textbf{274}, 1--34 (2021)

\bibitem{VP}
Arrieta, J.M., Villanueva-Pesqueira, M.: Elliptic and parabolic problems in thin domains with doubly weak oscillatory boundary. Commun. Pure Appl. Anal. \textbf{19}(4), 1891--1914 (2020)


\bibitem{FP}
Borisov, D., Freitas, P.: Asymptotics of Dirichlet eigenvalues and eigenfunctions of the Laplacian on thin domains in $\mathbb{R}^d$. J. Funct. Anal. \textbf{258}(3), 893--912 (2010)

\bibitem{BD}
Bucur, D., Henrot A., Michetti, M.: Asymptotic behaviour
of the Steklov spectrum on dumbbell domains. Commun. Partial Differ. Equ. \textbf{46}(2), 362--393 (2021)

\bibitem{buosostek}
Buoso, D.: Analyticity and criticality results for the eigenvalues of the biharmonic operator, in Geometric properties for parabolic and elliptic {PDE}'s, Springer Proc. Math. Stat., \textbf{176}, 65--85, Springer, 2016.

\bibitem{buoken}
Buoso, D., Kennedy, J.B.: The Bilaplacian with Robin boundary conditions. SIAM J. Math. Anal. \textbf{54}(1), 36--78 (2022) 

\bibitem{DBLP}
Buoso, D., Provenzano, L.: A few shape optimization results for a biharmonic Steklov problem. J. Differ. Equ. \textbf{259}(5), 1778--1818 (2015) 

\bibitem{AC}
Carvalho, A., Piskarev, S.: A general approximation scheme for attractors of abstract parabolic problems. Numer. Funct. Anal. Optim. \textbf{27}(7--8), 785--829  (2006)

\bibitem{CD}
Casado-Diaz, J., Luna-Laynez, M., Suarez-Grau, F.J.: A decomposition result for the pressure of a fluid in a thin domain and extensions to elasticity problems. SIAM J. Math. Anal. \textbf{52}(3), 2201--2236 (2020)

\bibitem{LMC}
Chasman, L.M.: An isoperimetric inequality for fundamental tones of free plates with nonzero Poisson’s ratio. Appl. Anal. \textbf{95}(8), 1700--1735  (2016)

\bibitem{ferlamstra}Ferraresso, F., Lamberti P.D. and Stratis I.G., On a Steklov Spectrum in Electromagnetics, in Adventures in Contemporary Electromagnetic Theory,
Edited by Mackay, Tom G. and Lakhtakia, Akhlesh, Springer, Cham, 2023, 195--228.

\bibitem{FEPR}
Ferraresso, F., Provenzano, L.: On the eigenvalues of the biharmonic operator with Neumann boundary conditions on a thin set.
Bull. Lond. Math. Soc. \textbf{55}(3), 1154--1177 (2023)

\bibitem{AFPDL}
Ferrero, A., Lamberti, P.D.: Spectral stability of the Steklov problem. Nonlinear Analysis \textbf{222}(2), 112989  (2022)

\bibitem{SS}
Ferrero, A., Lamberti, P.D.: Spectral stability for a class of fourth order Steklov problems under domain perturbations.  Calc. Var. \textbf{58}(33), 1--57 (2019)

\bibitem{folland} Folland, G.B., Real analysis,
   Pure and Applied Mathematics (New York),
   Modern techniques and their applications,
              A Wiley-Interscience Publication,
 John Wiley \& Sons, Inc., New York, 1984.



\bibitem{GA}
Gaudiello, A., Gomez, D., Perez-Martinez, M.-E.: Asymptotic analysis of the high frequencies for the Laplace operator in a thin T-like shaped structure. J. Math. Pures Appl. \textbf{134}(9), 299--327 (2020)

\bibitem{KJR}
Kuttler, J.R., Sigillito, V.G.: Estimating eigenvalues with a posteriori/a priori inequalities, volume 135
of Research Notes in Mathematics. Pitman (Advanced Publishing Program), Boston, MA, (1985)

\bibitem{PDLLP}
Lamberti, P.D., Provenzano, L.: On the explicit representation of the trace space $H^\frac{3}{2}$ and of the solutions to biharmonic Dirichlet problems on Lipschitz domains via multi-parameter Steklov problems. Rev.
Math. Complut. \textbf{35}, 53--88 (2022)

\bibitem{lampro}
Lamberti, P.D., Provenzano, L.: Viewing the {S}teklov eigenvalues of the {L}aplace operator as
critical {N}eumann eigenvalues,
Current trends in analysis and its applications,
    Trends Math.,
     171--178,
 Birkh\"{a}user/Springer, Cham, 2015.



\bibitem{GL}
Liu, G.: The Weyl-type asymptotic formula for biharmonic Steklov eigenvalues on Riemannian manifolds. Adv. Math. \textbf{228}(4), 2162--2217 (2011)

\bibitem{AP}
Liu, G.: On asymptotic properties of biharmonic Steklov 
eigenvalues. J. Differ. Equ. \textbf{261} 4729--4757  (2016)

\bibitem{JC}
Nakasato, J.C., Pazanin, I., Pereira, M.C.: Reaction-diffusion problem in a thin domain with oscillating boundary and varying order of thickness. Z. Angew. Math. Phys. \textbf{72}(1), Article Number: 5, (2021)

\bibitem{SA}
Nazarov, S.A., Perez, E., Taskinen, J.:  Localization effect for Dirichlet eigenfunctions in thin non-smooth domains. Trans. Amer. Math. Soc. \textbf{368}(7) 4787--4829 (2016)

\bibitem{MC}
Pereira, M.C., Rossi, J.D., Saintier, N.: Fractional problems in thin domains. Nonlinear Anal. \textbf{193}, Article Number: 111471, (2020)

\bibitem{FS}
Stummel, F.: Perturbation of domains in elliptic boundary-value problems. Lecture Notes in Mathematics, Springer, Berlin \textbf{503}, 110--136 (1976)

\bibitem{GMV}
Vainikko, G.M.: Regular convergence of operators and the approximate solution of equations. Math. Anal. \textbf{16}, 5--53 (1979)
 
\end{thebibliography}
\end{document}